\pgfplotsset{compat=1.16}
\newcommand{\C}{\mathbb{C}}
\newcommand{\T}{\mathbb{T}}
\newcommand{\Dopen}{\mathbb{D}}
\newcommand{\udisc}{\overline{\Dopen}}
\newcommand{\conv}{\operatorname{conv}}
\newcommand{\abs}[1]{\lvert #1 \rvert}
\newcommand{\Dshape}{\mathcal{D}}
\newtheorem{thm}{Theorem}[section]
\newtheorem{lem}[thm]{Lemma}
\newtheorem{cor}[thm]{Corollary}
\theoremstyle{definition}
\newtheorem{eg}[thm]{Example}
\newtheorem{rem}[thm]{Remark}
\newtheorem{defin}[thm]{Definition}
\numberwithin{equation}{section}
\newcommand{\Lu}{\mathcal{L}(u)}
\begin{document}
	\title[Angle duality for incomplete polynomials]{Angle duality and a gap principle for convex combinations of incomplete polynomials on the unit circle}

\author[T.~Zhang]{Teng Zhang}
\address{School of Mathematics and Statistics, Xi'an Jiaotong University, Xi'an 710049, P.~R.\ China}
\email{teng.zhang@stu.xjtu.edu.cn}

\subjclass[2020]{Primary 30C10; Secondary 26D10, 30A10.}
\keywords{polynomials; unit circle; incomplete polynomials; convex combinations; critical points; angle duality; gap principle}

 \begin{abstract}
		In this paper, we establish an angle duality and a gap principle for convex combinations of incomplete polynomials, extending two results of Ge and Gonek in [IMRN, 2024].
		Our approach is geometric: we introduce an ``angle gain'' mechanism for points inside the lune region and quantify how moving away from the unit circle forces a definite increase in the relevant angle functional.
		This yields a robust lower bound that is uniform under convex mixing and leads to the desired separation  phenomenon.
		The main difficulty is that incomplete polynomials and their convex combinations may have highly nonuniform root distributions on the unit circle, so classical convex-hull type constraints  are too coarse; one must instead control the local geometry of chords and boundary arcs and relate it to critical-point behavior through sharp trigonometric inequalities.
	\end{abstract}

\maketitle
		\section{Introduction}
		
	The geometry of zeros and critical points of complex polynomials is a classical topic in complex analysis.
One of the most fundamental results is the Gauss--Lucas theorem \cite[p.~18]{BE95}, which states that the critical points of any nonconstant polynomial $p(z)$ lie in the convex hull of its zeros.
	In particular, if all zeros of $p$ lie on the unit circle, then all critical points necessarily lie in the closed unit disk.
The	Gauss--Lucas theorem has been generalized in many directions; see, for instance, \cite{DE08,Sen21,Zha24,Zha26}.  Much of the modern progress on locating critical points of complex polynomials is due to  Sendov~\cite{Hay19}, Schmeisser~\cite{Sch77}, Schoenberg~\cite{Sch86}, and others~\cite{BX99,BS99,CN06,KPPSS11,KT16,Mal05,Paw98,Per03,Tang25,TZ25,Tao22,Zha26,Zha26+,Zha26++}, who pushed the subject beyond global convex-hull restrictions toward genuinely quantitative, local control.

	In this paper, we focus on polynomials whose zeros lie on the unit circle.
	Such polynomials occur in several areas of mathematics: for instance, after a suitable normalization, Dirichlet $L$-functions over function fields have all zeros on the unit circle; characteristic polynomials of unitary matrices have all zeros on the unit circle; and partition functions in statistical mechanics can have all zeros on the unit circle via the Lee--Yang theorem \cite{LY52}.
	
	Throughout, let $\mathcal{L}(u)$ be a monic polynomial of degree $N:=\deg\mathcal{L}\ge 2$ whose zeros all lie on the unit circle $\abs{u}=1$.
	By the Gauss--Lucas theorem, all critical points of $\mathcal{L}$ lie in the closed unit disk
	\[
	\udisc:=\{u\in\C:\ \abs{u}\le 1\},
	\]
	and we write
	\[
	\T:=\{u\in\C:\ \abs{u}=1\}
	\]
	for the unit circle.
	
	Denote the zeros of $\Lu$ by $z_1,z_2,\dots,z_N$, \emph{counted with multiplicity}, and the critical points (zeros of $\mathcal{L}'$) by $z_1',z_2',\dots,z_{N-1}'$, counted with multiplicity, so that
	\[
	\mathcal{L}(u)=\prod_{j=1}^{N}(u-z_j),
	\qquad
	\mathcal{L}'(u)=N\prod_{k=1}^{N-1}(u-z_k').
	\]
	Write
	\[
	z_k'=\abs{z_k'}\, e^{i\theta_k'}.
	\]
	(If some $z_k'=0$, we may assign $\theta_k'=0$ (or any value), since only $\abs{z_k'}$ and the angles $\Theta(z_k';z,z^{+})$ will be used.)
	
	Let $\{\zeta_1,\dots,\zeta_M\}$ be the set of \emph{distinct} zeros of $\mathcal{L}$ (so $1\le M\le N$).
	Write
	\[
	\zeta_j=e^{i\theta_j},\qquad 0\le \theta_1< \theta_2< \cdots < \theta_M<2\pi,
	\qquad
	\theta_{M+1}:=\theta_1+2\pi,
	\]
	and adopt cyclic indexing for the $\zeta_j$.
	In this paper, 
	we will often focus on a pair of \emph{consecutive distinct} zeros $\zeta_j$ and $\zeta_{j+1}$ of $\mathcal{L}$ (with cyclic indexing).
	In that case, we write
	\[
	z=e^{i\theta}\quad\text{for }\zeta_j,
	\qquad
	z^{+}=e^{i\theta^{+}}\quad\text{for }\zeta_{j+1},
	\]
	where $\theta<\theta^{+}<\theta+2\pi$.
	
	For three distinct points $a,b,c\in\C$, let $\Theta(a;b,c)\in[0,\pi]$ denote the (unoriented) angle between the vectors $\overrightarrow{ab}$ and $\overrightarrow{ac}$.
	
	Throughout, $\arg$ denotes the principal argument taking values in $(-\pi,\pi]$.
	When comparing arguments we work modulo $2\pi$, writing congruences $\equiv \pmod{2\pi}$.
	
	\begin{defin}[Endpoint convention]\label{def:endpoint}
		Let $z=e^{i\theta}$ and $z^{+}=e^{i\theta^{+}}$ be distinct points on $\T$ with $\theta<\theta^{+}<\theta+2\pi$.
		If $a\in\{z,z^{+}\}$, we set
		\[
		\Theta(a;z,z^{+}):=\frac{\theta^{+}-\theta}{2}.
		\]
		This convention is chosen so that our angle identities extend naturally to endpoint cases (e.g., when a zero of $\mathcal{L}_\lambda$ or a critical point of $\mathcal{L}$ coincides with $z$ or $z^{+}$).
	\end{defin}
	
	Ge and Gonek \cite{GG24} established the following \emph{angle duality theorem} relating the zeros and critical points of $\Lu$. 
	
	\begin{thm}[Ge--Gonek]\label{thm:Ge_Gonek}
		Let $z=e^{i\theta}$ and $z^{+}=e^{i\theta^{+}}$ be two consecutive \emph{distinct} zeros of $\mathcal{L}(u)$.
		Then
		\begin{equation*}
			\sum_{k=1}^{N-1}\Theta(z_k';z,z^{+})
			=\pi+(N-2)\frac{\theta^{+}-\theta}{2},
		\end{equation*}
		where the critical points $z_k'$ are counted with multiplicity and angles at endpoints are interpreted using Definition~\ref{def:endpoint} when needed.
	\end{thm}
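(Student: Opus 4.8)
The plan is to recognize each summand $\Theta(z_k';z,z^{+})$ as the increment, along a distinguished boundary arc, of the argument of the corresponding linear factor of $\mathcal L'$, then add up and evaluate. Put $z=\zeta_{j_0}$, $z^{+}=\zeta_{j_0+1}$, write $A^{-}=\{e^{i\phi}:\theta\le\phi\le\theta^{+}\}$ for the boundary arc between $z$ and $z^{+}$ carrying no other zero of $\mathcal L$, and $A^{+}$ for the complementary closed arc (which carries all remaining zeros of $\mathcal L$). For a function $F$ without zeros on the open arc $\{e^{i\phi}:\theta<\phi<\theta^{+}\}$, let $\Delta\arg F$ denote the net change of a continuous determination of $\arg F(e^{i\phi})$ as $\phi$ runs from $\theta$ to $\theta^{+}$ (endpoint values read as limits). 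First I note that no critical point lies on the open arc $A^{-}$: since every zero of $\mathcal L$ lies on $\T$, one has $\operatorname{Im}\bigl(ie^{i\phi}\mathcal L'(e^{i\phi})/\mathcal L(e^{i\phi})\bigr)=N/2$, so $\mathcal L'/\mathcal L$, hence $\mathcal L'$, is nonzero there. Consequently $\Delta\arg(u-z_k')$ is well defined for each $k$, and additivity of the argument over $\mathcal L'(u)=N\prod_{k}(u-z_k')$ gives $\sum_{k}\Delta\arg(u-z_k')=\Delta\arg\mathcal L'$. It therefore suffices to establish (i) $\Delta\arg(u-z_k')=\Theta(z_k';z,z^{+})$ for every critical point $z_k'$, and (ii) $\Delta\arg\mathcal L'=\pi+(N-2)\tfrac{\theta^{+}-\theta}{2}$.

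For (i), differentiation gives
\[
\frac{d}{d\phi}\arg(e^{i\phi}-w)=\operatorname{Re}\frac{e^{i\phi}}{e^{i\phi}-w}=\frac{1-\operatorname{Re}(\bar w\,e^{i\phi})}{\abs{e^{i\phi}-w}^{2}},
\]
which is $\ge 1-\abs w\ge 0$ for $w\in\udisc$, and is identically $\tfrac12$ when $\abs w=1$. Hence for $w\in\udisc$ off the open arc $A^{-}$ one has $\Delta\arg(u-w)>0$; it equals $\alpha:=\tfrac{\theta^{+}-\theta}{2}$ when $\abs w=1$ (consistent with Definition~\ref{def:endpoint} at $w\in\{z,z^{+}\}$); and it is bounded above by the full--circle increment, which is $2\pi$ when $\abs w<1$. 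Moreover $\Delta\arg(u-w)\equiv\arg\tfrac{z^{+}-w}{z-w}\pmod{2\pi}$, while $\Theta(w;z,z^{+})=\bigl\lvert\arg\tfrac{z^{+}-w}{z-w}\bigr\rvert$; so the number $\Delta\arg(u-w)\in(0,2\pi)$ is pinned down once the \emph{sign} of $\arg\tfrac{z^{+}-w}{z-w}$ is fixed. This is the crux, and the ``obvious'' choice is false for general $w$ in the disk (e.g.\ $w=0$ with $\alpha>\pi/2$). The fix uses Gauss--Lucas: every $z_k'$ lies in $\conv\{z_1,\dots,z_N\}$, and since $z,z^{+}$ are \emph{consecutive} distinct zeros, all zeros lie in the closed half-plane $\overline H$ bounded by the line through $z,z^{+}$ on the side of $A^{+}$; hence so does every $z_k'$. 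The M\"obius map $g(u)=\frac{u-z}{u-z^{+}}$ sends $\T$ onto the line $e^{-i\alpha}\R$, the open disk onto the open half-plane $\{\zeta:\arg\zeta\in(-\pi-\alpha,-\alpha)\ (\mathrm{mod}\ 2\pi)\}$, the chord $zz^{+}$ onto $\R_{<0}$, and the open arc $A^{-}$ onto $e^{-i\alpha}\R_{<0}$; tracing these images shows $\arg\tfrac{z^{+}-w}{z-w}\in(0,\pi]$ for every $w\in\overline H\cap\Dopen$ (it would be negative for $w$ in the ``lune'' between the chord and $A^{-}$). Combined with $\Delta\arg(u-w)\in(0,2\pi)$ and the congruence, this forces $\Delta\arg(u-w)=\arg\tfrac{z^{+}-w}{z-w}=\Theta(w;z,z^{+})$ when $\abs w<1$; for $\abs w=1$ it is $\alpha=\Theta(w;z,z^{+})$ directly. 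Applying this at $w=z_k'$ gives (i). I expect this sign analysis---ruling out via Gauss--Lucas that a critical point sits in the lune---to be the main obstacle.

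For (ii), factor $\mathcal L'=\mathcal L\cdot(\mathcal L'/\mathcal L)$, so $\Delta\arg\mathcal L'=\Delta\arg\mathcal L+\Delta\arg(\mathcal L'/\mathcal L)$. Since all zeros lie on $\T$, $\frac{d}{d\phi}\arg\mathcal L(e^{i\phi})=\sum_j m_j\operatorname{Re}\frac{e^{i\phi}}{e^{i\phi}-\zeta_j}=N/2$, whence $\Delta\arg\mathcal L=N\alpha$. For the logarithmic derivative, the identity $\frac{e^{i\phi}}{e^{i\phi}-e^{i\beta}}=\frac{e^{i(\phi-\beta)/2}}{2i\sin\frac{\phi-\beta}{2}}$ yields $ie^{i\phi}\frac{\mathcal L'}{\mathcal L}(e^{i\phi})=\tfrac12\bigl(S(\phi)+iN\bigr)$, where $S(\phi)=\sum_j m_j\cot\frac{\phi-\theta_j}{2}\in\R$ and $m_j$ is the multiplicity of $\zeta_j=e^{i\theta_j}$; thus $\frac{\mathcal L'}{\mathcal L}(e^{i\phi})=-\tfrac i2 e^{-i\phi}\bigl(S(\phi)+iN\bigr)$ and $\arg\frac{\mathcal L'}{\mathcal L}(e^{i\phi})\equiv-\tfrac\pi2-\phi+\arg\bigl(S(\phi)+iN\bigr)\pmod{2\pi}$. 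As $\phi$ runs over $[\theta,\theta^{+}]$, the term $-\phi$ changes by $-(\theta^{+}-\theta)=-2\alpha$; and $S(\phi)\to+\infty$ as $\phi\downarrow\theta$ (the $j_0$-term) while $S(\phi)\to-\infty$ as $\phi\uparrow\theta^{+}$ (the $(j_0{+}1)$-term), so $S(\phi)+iN$ moves through the open upper half-plane from direction $0$ to direction $\pi$ and $\arg\bigl(S(\phi)+iN\bigr)$ changes by $\pi$. Hence $\Delta\arg(\mathcal L'/\mathcal L)=\pi-2\alpha$ and $\Delta\arg\mathcal L'=N\alpha+\pi-2\alpha=\pi+(N-2)\alpha$. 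Together with (i) and $\sum_k\Delta\arg(u-z_k')=\Delta\arg\mathcal L'$, this yields $\sum_{k}\Theta(z_k';z,z^{+})=\pi+(N-2)\tfrac{\theta^{+}-\theta}{2}$.
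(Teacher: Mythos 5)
Your proof is correct, and it takes a genuinely different route from the paper's. The paper does not prove Theorem~\ref{thm:Ge_Gonek} directly (it is quoted from \cite{GG24}); its own derivation of this identity is the special case $\lambda_j\equiv 1/N$ of Theorem~\ref{thm:incomplete_dual}, whose proof evaluates $\mathcal{L}_\lambda(z^{+})/\mathcal{L}_\lambda(z)$ in two ways, obtains the angle identity only modulo $2\pi$, and then removes the integer ambiguity by a continuity-and-connectedness argument over the parameter space of configurations, normalizing at $\mathcal{L}(u)=u^N-1$. You instead integrate $d\arg$ along the zero-free arc $A^{-}$ from $z$ to $z^{+}$: the monotonicity $\frac{d}{d\phi}\arg(e^{i\phi}-w)\ge 0$ for $w\in\udisc$ confines each $\Delta\arg(u-z_k')$ to $(0,2\pi)$, so the $2\pi$ ambiguity is resolved factor by factor with no deformation argument, and the total variation $\Delta\arg\mathcal{L}'$ is computed exactly from the cotangent form of the logarithmic derivative ($S(\phi)\to\pm\infty$ at the two endpoints, contributing exactly $\pi$). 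Your route also absorbs multiple zeros uniformly, with no analogue of the paper's Step~2, though it is tied to $\mathcal{L}'$ rather than to general convex combinations. Both arguments ultimately rest on the same geometric input --- the sign of $\arg\frac{z^{+}-w}{z-w}$ for $w\in\Dshape(z,z^{+})$, which is the paper's Lemma~\ref{lem:arg-angle} --- and your justification (Gauss--Lucas to exclude the lune, plus the M\"obius image of disk, chord and arcs) is correct though stated tersely. Two cosmetic points: the bound ``$\ge 1-\abs{w}$'' applies to the numerator $1-\operatorname{Re}(\bar{w}e^{i\phi})$ rather than to the quotient, but only nonnegativity (strict for $\abs{w}<1$) is used; and the claim $\Delta\arg(u-w)<2\pi$ for $\abs{w}<1$ deserves the one-line justification that the increment over the complementary arc $A^{+}$ is strictly positive, so the increment over $A^{-}$ falls strictly short of the full winding number $2\pi$.
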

	Figure~\ref{fig:three-angles-pi} illustrates the case $N=3$ in Theorem~\ref{thm:Ge_Gonek}.
	\begin{figure}[htbp]
		\centering
		\begin{tikzpicture}[scale=2.8]
			\coordinate (O) at (0,0);
			
			\draw[black,dashed,thick] (O) circle (1);
			
			\coordinate (z)  at (1,0);
			\coordinate (zp) at (0,1);
			\coordinate (zm) at (0,-1);
			
			\draw[red,fill=white,thick] (z)  circle (0.012);
			\draw[red,fill=white,thick] (zp) circle (0.012);
			\draw[red,fill=white,thick] (zm) circle (0.012);
			
			\node[red,font=\small,anchor=west]  at (z)  {$z=1$};
			\node[red,font=\small,anchor=south] at (zp) {$z^{+}=i$};
			\node[red,font=\small,anchor=north] at (zm) {$-i$};
			
			\coordinate (c1) at (0.3333333,  0.4714045);
			\coordinate (c2) at (0.3333333, -0.4714045);
			
			\foreach \C in {c1,c2}{
				\draw[blue,thick] (\C) ++(-0.015,-0.015) -- ++(0.03,0.03);
				\draw[blue,thick] (\C) ++(-0.015, 0.015) -- ++(0.03,-0.03);
			}
			\node[blue,font=\small,anchor=west] at (c1) {$z_1'$};
			\node[blue,font=\small,anchor=west] at (c2) {$z_2'$};
			
			\draw[black,densely dashed,thick] (O) -- (z);
			\draw[black,densely dashed,thick] (O) -- (zp);
			
			\draw[black!65,thick] (c1) -- (z);
			\draw[black!65,thick] (c1) -- (zp);
			\draw[black!65,thick] (c2) -- (z);
			\draw[black!65,thick] (c2) -- (zp);
			
			\def\rO{0.18}
			\draw[magenta,thick] (\rO,0) arc[start angle=0,end angle=90,radius=\rO];
			\node[font=\scriptsize] at ($(O)+(0.22,0.12)$) {$\frac{\pi}{2}$};
			
			\pgfmathsetmacro{\aone}{atan2(0-0.4714045,1-0.3333333)}
			\pgfmathsetmacro{\bone}{atan2(1-0.4714045,0-0.3333333)}
			\def\rcone{0.10}
			\draw[magenta,thick]
			($(c1)+(\aone:\rcone)$) arc[start angle=\aone,end angle=\bone,radius=\rcone];
			\node[font=\scriptsize] at ($(c1)+(0.02,0.14)$) {$\frac{7\pi}{8}$};
			
			\pgfmathsetmacro{\atwo}{atan2(0+0.4714045,1-0.3333333)}
			\pgfmathsetmacro{\btwo}{atan2(1+0.4714045,0-0.3333333)}
			\def\rctwo{0.10}
			\draw[magenta,thick]
			($(c2)+(\atwo:\rctwo)$) arc[start angle=\atwo,end angle=\btwo,radius=\rctwo];
			\node[font=\scriptsize] at ($(c2)+(0.02,0.14)$) {$\frac{3\pi}{8}$};
			
		\end{tikzpicture}
		\caption{An illustration of Theorem~\ref{thm:Ge_Gonek} for $\mathcal{L}(u)=(u-1)(u-i)(u+i)$.}
		\label{fig:three-angles-pi}
	\end{figure}
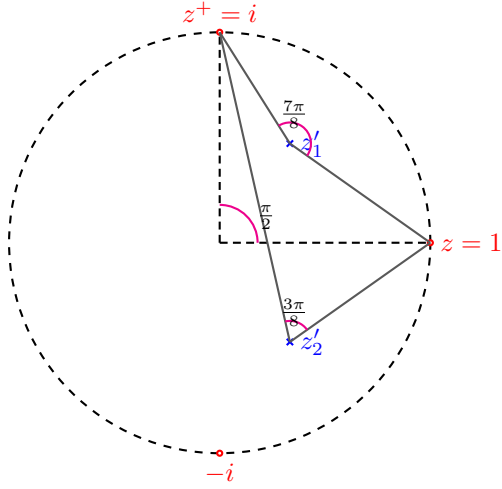
	
	Ge and Gonek \cite{GG24} also derived the following \emph{gap principle}: if there is a large gap between consecutive \emph{distinct} zeros of $\mathcal{L}$ on $\T$, then most critical points of $\mathcal{L}$ must lie close to  $\T$.
	
	\begin{thm}[Ge--Gonek]\label{thm:Ge_Gonek_gap}
		Let $\zeta_1,\dots,\zeta_M$ be the distinct zeros of $\mathcal{L}(u)$ with arguments
		\[
		0\le \theta_1<\theta_2<\cdots<\theta_M<2\pi,
		\qquad
		\theta_{M+1}:=\theta_1+2\pi,
		\]
		and let $G$ be the largest gap between arguments of consecutive \emph{distinct} zeros, i.e.,
		\[
		G=\max_{1\le j\le M}\bigl(\theta_{j+1}-\theta_j\bigr).
		\]
		Let $\mathcal{N}_\varepsilon$ be the number of critical points of $\mathcal{L}(u)$ at distance greater than $\varepsilon$ from the unit circle, i.e.,
		\[
		\mathcal{N}_\varepsilon=\bigl|\{\,z_k' : \abs{z_k'}<1-\varepsilon\,\}\bigr|.
		\]
		Then there exists an absolute constant $c_0>0$ such that
		\[
		\mathcal{N}_\varepsilon \le \frac{c_0}{\varepsilon\,G}.
		\]
		In particular, one may take $c_0=4\pi$.
		(Here ``absolute'' means independent of $N$, $\varepsilon$, and $G$.)
	\end{thm}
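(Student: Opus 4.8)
The plan is to extract the estimate from the angle-duality identity of Theorem~\ref{thm:Ge_Gonek} together with a quantitative ``angle gain'' estimate that converts ``distance at least $\varepsilon$ from $\T$'' into a definite lower bound on the functional $\Theta(\,\cdot\,;z,z^{+})$. First dispose of the degenerate case $M=1$: then $\mathcal L(u)=(u-\zeta_1)^N$, every critical point equals $\zeta_1\in\T$, so $\mathcal N_\varepsilon=0$ and there is nothing to prove; likewise we may assume $\mathcal N_\varepsilon\ge1$ and $\varepsilon\in(0,1)$. So assume $M\ge2$, pick consecutive distinct zeros $z=e^{i\theta},\,z^{+}=e^{i\theta^{+}}$ realizing the largest gap $\theta^{+}-\theta=G\in(0,2\pi)$, and rotate so that the empty gap arc is centered at the point $1$, i.e.\ $z=e^{-iG/2}$, $z^{+}=e^{iG/2}$, with all remaining distinct zeros on the long arc $\{e^{i\phi}:G/2\le\phi\le 2\pi-G/2\}$. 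By the Gauss--Lucas theorem every critical point then lies in the closed ``lune''
\[
\conv\{z_1,\dots,z_N\}\ \subseteq\ \Lambda:=\bigl\{\,w\in\C:\ \abs w\le1,\ \operatorname{Re}w\le\cos(G/2)\,\bigr\},
\]
cut off by the chord $[z,z^{+}]$ and that long arc. On $\Lambda$ one automatically has $\abs w\ge-\cos(G/2)$ (since $\operatorname{Re}w\ge-\abs w$), and in particular, because $\mathcal N_\varepsilon\ge1$, $1-\varepsilon\ge-\cos(G/2)$.

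The core of the proof is the following \emph{angle-gain lemma}: for every $w\in\Lambda$ with $\abs w\le1-\varepsilon$,
\[
\Theta(w;z,z^{+})\ \ge\ \frac G2+\varepsilon\tan\frac G4\ \ge\ \frac G2+\frac{\varepsilon G}{4}.
\]
To prove it, write $w=\rho e^{i\psi}$ with $\rho\le1-\varepsilon<1$. From $\arg(e^{i\phi}-w)=\phi+\arg(1-we^{-i\phi})$ and the fact that $1-we^{\mp iG/2}$ has positive real part, one obtains the closed form
\[
\Theta(w;z,z^{+})=G+\arctan\frac{\rho\sin(\psi+\tfrac G2)}{1-\rho\cos(\psi+\tfrac G2)}-\arctan\frac{\rho\sin(\psi-\tfrac G2)}{1-\rho\cos(\psi-\tfrac G2)},
\]
which is genuinely the angle (not its $2\pi$-complement) throughout the connected slice $\Lambda\cap\{\abs w=\rho\}$; this requires checking that the right-hand side lies in $[0,\pi]$ on that slice, which follows from the admissibility bound $\rho\ge-\cos(G/2)$. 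Writing the right-hand side as $G+\int_{\psi-G/2}^{\psi+G/2}F'(\alpha)\,d\alpha$ with $F'(\alpha)=\dfrac{\rho(\cos\alpha-\rho)}{1-2\rho\cos\alpha+\rho^{2}}$, one notes that $F'$ is $2\pi$-periodic, even, symmetric about $\alpha=\pi$, and strictly increasing in $\cos\alpha$; hence $F'$ is valley-shaped with a unique minimum at $\alpha=\pi$, and the integral of $F'$ over a window of fixed length $G$ is minimized exactly when the window is centered at $\psi=\pi$. This is the angle-gain mechanism: on each circle $\abs w=\rho$ the quantity $\Theta(\,\cdot\,;z,z^{+})$ is smallest at the ``anti-gap'' point $-\rho$.

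Thus it remains to study $\Theta(-\rho;z,z^{+})=G-2\arctan\dfrac{\rho\sin(G/2)}{1+\rho\cos(G/2)}$. The argument of $\arctan$ is increasing in $\rho$, so this is decreasing in $\rho$, giving $\Theta(w;z,z^{+})\ge\Theta(-\rho;z,z^{+})\ge\Theta(-(1-\varepsilon);z,z^{+})$; and $g(\varepsilon):=\Theta(-(1-\varepsilon);z,z^{+})-\tfrac G2=\tfrac G2-2\arctan\dfrac{(1-\varepsilon)\sin(G/2)}{1+(1-\varepsilon)\cos(G/2)}$ satisfies $g(0)=0$, $g'(0)=\tan(G/4)$, and $g''\ge0$ on the admissible range: indeed $g'(\varepsilon)=\dfrac{2\sin(G/2)}{(1-\varepsilon)^{2}+2(1-\varepsilon)\cos(G/2)+1}$, whose denominator is non-increasing in $\varepsilon$ once $1-\varepsilon\ge-\cos(G/2)$. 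Convexity and $\tan x\ge x$ give $g(\varepsilon)\ge\varepsilon\tan(G/4)\ge\varepsilon G/4$, proving the lemma. Taking $\varepsilon\downarrow0$ in the same reasoning (together with the inscribed-angle theorem, or Definition~\ref{def:endpoint} at endpoints, for any critical points on $\T$) yields the baseline $\Theta(z_k';z,z^{+})\ge G/2$ for all $N-1$ critical points. Now apply Theorem~\ref{thm:Ge_Gonek}: splitting the identity $\sum_{k=1}^{N-1}\Theta(z_k';z,z^{+})=\pi+(N-2)\tfrac G2$ according to whether $\abs{z_k'}<1-\varepsilon$,
\[
\pi+(N-2)\frac G2\ \ge\ \mathcal N_\varepsilon\Bigl(\frac G2+\varepsilon\tan\frac G4\Bigr)+(N-1-\mathcal N_\varepsilon)\frac G2\ =\ (N-1)\frac G2+\mathcal N_\varepsilon\,\varepsilon\tan\frac G4,
\]
hence $\mathcal N_\varepsilon\le\dfrac{\pi-G/2}{\varepsilon\tan(G/4)}\le\dfrac{4\pi}{\varepsilon G}$, the last step because $4\pi\tan(G/4)\ge\pi G\ge(\pi-G/2)G$. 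This is precisely $c_0=4\pi$.

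The step I expect to be the main obstacle is the minimization inside the lemma: rigorously proving that on each concentric circle the angle functional is minimized at the anti-gap point $-\rho$, and, tangled with it, the branch/interval bookkeeping making the closed form valid uniformly for $G\in(0,2\pi)$ --- the placement of the chord $[z,z^{+}]$ relative to the origin changes at $G=\pi$, and for $G>\pi$ one must exploit that critical points cannot reach radius smaller than $-\cos(G/2)$, which is exactly what makes $-(1-\varepsilon)$ an admissible comparison point. Once the closed form and the valley-shaped monotonicity of $F'$ are established, the trigonometric endgame ($g''\ge0$, $\tan x\ge x$) and the final counting step are routine.
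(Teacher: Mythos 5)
Your proof is correct, and its overall skeleton coincides with the paper's: reduce to the maximal-gap pair $(z,z^{+})$, use the angle duality $\sum_k\Theta(z_k';z,z^{+})=\pi+(N-2)\tfrac{G}{2}$, note that every critical point contributes at least $\tfrac{G}{2}$, show that each critical point with $\abs{z_k'}<1-\varepsilon$ contributes an extra $\delta>0$, and count. Where you genuinely diverge is in the angle-gain lemma. The paper (Lemma~\ref{lem:angle-gain}) argues synthetically: it extends the line $uz^{+}$ to a second intersection $A$ with $\T$, applies the inscribed-angle theorem and the law of sines in $\triangle Auz$, and obtains $\delta=\tfrac{\varepsilon}{2}\sin(G/2)$ for $G\le\pi$ and $\delta=\tfrac{\varepsilon}{2}$ for $G>\pi$ (the case split coming from the diameter bound of Lemma~\ref{lem:D-diameter}). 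You instead derive the closed form $\Theta(w;z,z^{+})=G+F(\psi+\tfrac G2)-F(\psi-\tfrac G2)$, show by a sliding-window/rearrangement argument that on each circle $\abs{w}=\rho$ the angle is minimized at the anti-gap point $-\rho$, and then run a one-variable convexity estimate in $\rho$ to get the uniform gain $\delta=\varepsilon\tan(G/4)\ge\varepsilon G/4$ with no case split at $G=\pi$. Your bound is slightly stronger near $G=\pi$ and degenerates correctly as $G\to 2\pi$ (where admissible $\varepsilon$ are forced to be small), and it identifies the extremal configuration on each concentric circle, which the synthetic argument does not; the paper's route is shorter and avoids any branch bookkeeping. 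Both endgames yield $c_0=4\pi$ by elementary trigonometric inequalities.

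The one spot you should tighten is the claim that the closed form is ``genuinely the angle (not its $2\pi$-complement)'' on $\Dshape(z,z^{+})\cap\{\abs{w}<1\}$; attributing this to the admissibility bound $\rho\ge-\cos(G/2)$ is not quite a proof. A clean fix: the closed form $C(w)$ and $\Theta(w;z,z^{+})$ are both continuous on the convex (hence connected) set $\Dshape(z,z^{+})\cap\{\abs{w}<1\}$, and by Lemma~\ref{lem:arg-angle} their difference takes values in $2\pi\mathbb{Z}$ there; evaluating at a point of the open chord $[z,z^{+}]$ gives $C=\pi=\Theta$, so $C\equiv\Theta$ throughout. With that patch the argument is complete.
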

	Figure~\ref{fig:gap-principle-example} illustrates the case $N=3$ in Theorem~\ref{thm:Ge_Gonek_gap}.
	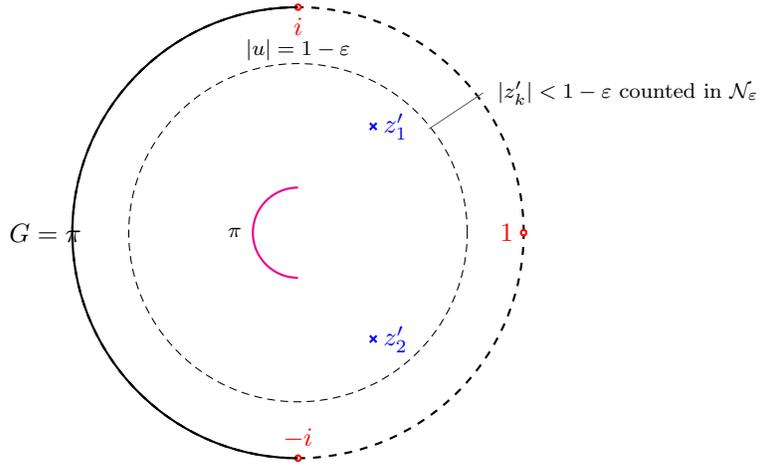
\begin{figure}[htbp]
		\centering
		\begin{tikzpicture}[scale=3.0]
			\coordinate (O) at (0,0);
			
			\def\reps{0.75}
			
			\draw[black,dashed,thick] (O) circle (1);
			\draw[black,densely dashed] (O) circle (\reps);
			\node[font=\scriptsize] at (0,\reps+0.06) {$\abs{u}=1-\varepsilon$};
			
			\coordinate (z0)   at (1,0);
			\coordinate (zpi2) at (0,1);
			\coordinate (z3pi2) at (0,-1);
			
			\foreach \P/\lab/\pos in {z0/$1$/east,zpi2/$i$/north,z3pi2/$-i$/south}{
				\draw[red,fill=white,thick] (\P) circle (0.012);
				\node[red,font=\small,anchor=\pos] at (\P) {\lab};
			}
			
			\draw[black,thick] (zpi2) arc[start angle=90,end angle=270,radius=1];
			\node[font=\small] at (-1.12,0) {$G=\pi$};
			
			\def\rG{0.20}
			\draw[magenta,thick] ($(O)+(0,\rG)$) arc[start angle=90,end angle=270,radius=\rG];
			\node[font=\scriptsize] at (-0.28,0) {$\pi$};
			
			\coordinate (c1) at (0.3333333,  0.4714045);
			\coordinate (c2) at (0.3333333, -0.4714045);
			
			\foreach \C/\lab/\pos in {c1/$z_1'$/west,c2/$z_2'$/west}{
				\draw[blue,thick] (\C) ++(-0.015,-0.015) -- ++(0.03,0.03);
				\draw[blue,thick] (\C) ++(-0.015, 0.015) -- ++(0.03,-0.03);
				\node[blue,font=\small,anchor=\pos] at (\C) {\lab};
			}
			
			\draw[black!60] (0.58,0.46) -- (0.82,0.62);
			\node[font=\scriptsize,anchor=west] at (0.84,0.62)
			{$\abs{z_k'}<1-\varepsilon$ counted in $\mathcal{N}_\varepsilon$};
			
		\end{tikzpicture}
		\caption{Illustration of Theorem~\ref{thm:Ge_Gonek_gap} for $\mathcal{L}(u)=(u-1)(u-i)(u+i)$.}
		\label{fig:gap-principle-example}
	\end{figure}
	
	D\'{\i}az-Barrero and Egozcue \cite{DE08} introduced \emph{incomplete polynomials} and showed that the derivative admits a representation as a convex combination of them, thereby providing a generalization of the notion of the derivative for complex polynomials. More precisely, let
	\[
	p(u)=\prod_{j=1}^N (u-z_j)
	\]
	be a monic polynomial of degree $N\ge 2$ (with zeros $z_j$ counted with multiplicity).
	Define the associated incomplete polynomials
	\[
	p_j(u)=\prod_{k\neq j}(u-z_k)=\frac{p(u)}{u-z_j},
	\qquad 1\le j\le N.
	\]
	Given weights $\lambda_1,\dots,\lambda_N\ge 0$ with $\sum_{j=1}^N\lambda_j=1$, define
	\begin{equation*}
		p_\lambda(u)=\sum_{j=1}^N \lambda_j\, p_j(u).
	\end{equation*}
	Since each $p_j$ is monic of degree $N-1$, the polynomial $p_\lambda$ is also monic of degree $N-1$.
	In particular, the choice $\lambda_1=\cdots=\lambda_N=1/N$ gives
	\[
	\frac{1}{N}\sum_{j=1}^N p_j(u)=\frac{p'(u)}{N}.
	\]
Many classical results on the geometry of zeros and critical points extend from the ordinary derivative $p'$ to the more general setting of zeros of $p$ and of convex combinations of its incomplete polynomials.  Examples include Gauss--Lucas type theorems \cite{DE08,Zha24}, Laguerre's theorem for polar derivatives \cite{KD22}, and various majorization relations \cite{Zha24}.

	In this paper, we apply this construction to $p=\mathcal{L}$ and write
	\[
	\mathcal{L}_\lambda(u):=\sum_{j=1}^N \lambda_j\,\frac{\mathcal{L}(u)}{u-z_j}.
	\]
	For convenience, set
	\[
	\mathcal{L}_j(u):=\frac{\mathcal{L}(u)}{u-z_j},
	\qquad 1\le j\le N,
	\]
	so that $\mathcal{L}_\lambda(u)=\sum_{j=1}^N \lambda_j\,\mathcal{L}_j(u)$.
	Our goal is to extend Theorem~\ref{thm:Ge_Gonek} and Theorem~\ref{thm:Ge_Gonek_gap} from critical points of $\mathcal{L}$ to zeros of $\mathcal{L}_\lambda$.
	For the angle duality, we will assume \emph{strictly positive} weights; otherwise the conclusion may fail (see Remark~\ref{rem:zero-weight-counterexample} below).

	\begin{thm}\label{thm:incomplete_dual}
		Assume that $\lambda_1,\dots,\lambda_N>0$ and $\sum_{j=1}^N\lambda_j=1$.
		Let $z=e^{i\theta}$ and $z^{+}=e^{i\theta^{+}}$ be two consecutive \emph{distinct} zeros of $\mathcal{L}(u)$.
		Let $w_1,\ldots,w_{N-1}$ be the zeros of $\mathcal{L}_\lambda(u)$, counted with multiplicity.
		Then
		\begin{equation}\label{eq:main-angle}
			\sum_{k=1}^{N-1}\Theta(w_k;z,z^{+})
			=\pi+(N-2)\frac{\theta^{+}-\theta}{2},
		\end{equation}
		where angles at endpoints are interpreted using Definition~\ref{def:endpoint} when needed.
	\end{thm}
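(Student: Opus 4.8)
The plan is to reduce to the case of distinct zeros, linearize the angle by a Möbius change of coordinates, identify the angle sum with a branch of $\arg\bigl(\mathcal L_\lambda(z)/\mathcal L_\lambda(z^{+})\bigr)$, and pin down that branch by a connectedness argument anchored at Theorem~\ref{thm:Ge_Gonek}. \emph{Step 1 (reduction to distinct zeros).} If a distinct zero $\zeta_l$ of $\mathcal L$ has multiplicity $m_l\ge2$, then writing $\mathcal L=(u-\zeta_l)^{m_l}g$ with $g(\zeta_l)\ne0$ and using $\lambda_j>0$ one checks that $\mathcal L_\lambda$ vanishes to order exactly $m_l-1$ at $\zeta_l$; collecting equal zeros,
\[
\mathcal L_\lambda(u)=\Bigl(\prod_{l=1}^{M}(u-\zeta_l)^{m_l-1}\Bigr)P(u),\qquad
P(u)=\sum_{l=1}^{M}\mu_l\prod_{l'\ne l}(u-\zeta_{l'}),\quad \mu_l:=\sum_{z_j=\zeta_l}\lambda_j>0,
\]
so $P$ is monic of degree $M-1$ and is the analogue of $\mathcal L_\lambda$ for the squarefree polynomial $\prod_l(u-\zeta_l)$. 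Every repeated factor $\zeta_l$ either lies on the major arc from $z^{+}$ to $z$ or equals $z$ or $z^{+}$, so by the inscribed angle theorem, respectively Definition~\ref{def:endpoint}, it contributes $\Theta(\zeta_l;z,z^{+})=\tfrac{\theta^{+}-\theta}{2}$; these account for $(N-M)\tfrac{\theta^{+}-\theta}{2}$. Hence \eqref{eq:main-angle} follows once we show $\sum_{w}\Theta(w;z,z^{+})=\pi+(M-2)\tfrac{\theta^{+}-\theta}{2}$ over the zeros $w$ of $P$. So we may assume $\mathcal L(u)=\prod_{l=1}^{N}(u-z_l)$ with the $z_l$ \emph{distinct}, $z=z_1$, $z^{+}=z_2$, and $z_3,\dots,z_N$ lying on the open arc $A'=\{e^{i\psi}:\theta^{+}<\psi<\theta+2\pi\}$.

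\emph{Step 2 (localization of the zeros and Möbius linearization).} Since $\mathcal L_\lambda/\mathcal L=\sum_l\lambda_l/(u-z_l)$ has a simple pole at each $z_l$, no zero $w_k$ of $\mathcal L_\lambda$ equals a $z_l$; and taking the complex conjugate of $\sum_l\lambda_l/(w_k-z_l)=0$ and solving for $w_k$ exhibits $w_k=\sum_l\nu_l z_l$ as a convex combination with all $\nu_l>0$. As the $z_l$ are distinct points of $\T$, hence in convex position, $w_k$ lies in the interior of their convex hull (the open segment $(z_1,z_2)$ when $N=2$), in particular in $\Dopen$; and this hull is contained in the closed circular segment $V$ bounded by the chord $[z_1,z_2]$ and the arc $A'$. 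Thus every $w_k\in V\cap\Dopen$ — the key geometric input, and the place (together with Step 1) where $\lambda_j>0$ is needed. Put $T(u)=(u-z_1)/(u-z_2)$ and $\alpha=\tfrac{\theta^{+}-\theta}{2}\in(0,\pi)$. From $T(e^{i\psi})=e^{-i\alpha}\sin\tfrac{\psi-\theta}{2}/\sin\tfrac{\psi-\theta^{+}}{2}$ one reads off that $T$ carries $\T$ onto the line $\partial H$ through $0$ of direction $e^{-i\alpha}$, with $A'$ onto the open ray $\{te^{-i\alpha}:t>0\}$ and the open chord onto the negative real axis; since $T(0)=e^{-2i\alpha}\in H:=\{\zeta:\operatorname{Im}(\zeta e^{i\alpha})<0\}$, $T$ maps $\Dopen$ conformally onto $H$. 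Let $\arg_H$ be the continuous branch of $\arg$ on $H$ with values in $(-\pi-\alpha,-\alpha)$; it equals $-\pi$ on the negative reals and $-\alpha$ on the $A'$-ray, and one checks $T(V\cap\Dopen)\subseteq\{\zeta\in H:-\pi\le\arg_H\zeta<-\alpha\}$. Because $T(w)=(z-w)/(z^{+}-w)$, for every $w\in V\cap\Dopen$,
\[
\Theta(w;z,z^{+})=\bigl|\arg T(w)\bigr|=-\arg_H\!\bigl(T(w)\bigr)\in(\alpha,\pi],
\]
the last equality because $\arg_H(T(w))\in[-\pi,-\alpha)$. Writing $F:=\sum_{k=1}^{N-1}\arg_H(T(w_k))$, the goal becomes $F=-\pi-(N-2)\alpha$.

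\emph{Step 3 (evaluating $F$).} As $\mathcal L_\lambda$ is monic with zero set $\{w_k\}$,
\[
\prod_{k=1}^{N-1}T(w_k)=\prod_{k}\frac{w_k-z_1}{w_k-z_2}=\frac{\mathcal L_\lambda(z_1)}{\mathcal L_\lambda(z_2)}=-\frac{\lambda_1}{\lambda_2}\prod_{l=3}^{N}T(z_l),
\]
using $\mathcal L_\lambda(z_1)=\lambda_1\prod_{l\ge2}(z_1-z_l)$ and $\mathcal L_\lambda(z_2)=\lambda_2(z_2-z_1)\prod_{l\ge3}(z_2-z_l)$. Taking arguments, and using $\arg_H(T(z_l))=-\alpha$ for $l\ge3$ since $z_l\in A'$, gives $F\equiv-\pi-(N-2)\alpha\pmod{2\pi}$. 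To promote this to an equality, view $F$ as a function of the admissible data $(\lambda_1,\dots,\lambda_N;z_3,\dots,z_N)$: it is continuous, since the multiset of zeros of $\mathcal L_\lambda$ varies continuously inside $\Dopen$, where $\arg_H\circ T$ is continuous. Being continuous and valued in the discrete set $-\pi-(N-2)\alpha+2\pi\mathbb Z$, $F$ is constant on each connected component of the parameter space; and $F$ is invariant under permuting the pairs $(z_l,\lambda_l)$, $l\ge3$, an action transitive on these components, so $F$ is globally constant. Finally, at $\lambda_1=\dots=\lambda_N=1/N$ we have $\mathcal L_\lambda=\mathcal L'/N$, so the $w_k$ are exactly the critical points of $\mathcal L$, which also lie in $V\cap\Dopen$; Theorem~\ref{thm:Ge_Gonek} gives $\sum_k\Theta(w_k;z,z^{+})=\pi+(N-2)\alpha$, i.e.\ $F=-\pi-(N-2)\alpha$ there. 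Hence $F\equiv-\pi-(N-2)\alpha$, so $\sum_k\Theta(w_k;z,z^{+})=\pi+(N-2)\alpha$ in the squarefree case; combined with Step 1 this proves \eqref{eq:main-angle}.

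\emph{Where the difficulty lies.} The congruence $F\equiv-\pi-(N-2)\alpha\pmod{2\pi}$ is immediate, but the elementary a priori bound $F\in[-(N-1)\pi,-(N-1)\alpha)$ pins $F$ down uniquely only for $N\le3$ (there the interval has length $<2\pi$); for $N\ge4$ a genuinely global argument is required, and using Ge--Gonek as the anchor of the deformation is the most economical route. A self-contained alternative is to treat $N=3$ by the interval bound and then let $z_3,\dots,z_N$ coalesce to one point of $A'$, which splits off $N-3$ zeros each contributing $\alpha$ and reduces to three zeros. One must also check throughout that the $w_k$ never touch $\T$ or the vertices $z_1,z_2$ (again from $\lambda_j>0$), so that $\arg_H\circ T$ is never evaluated at a singularity.
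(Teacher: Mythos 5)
Your proof is correct and follows essentially the same route as the paper's: the same squarefree reduction $\mathcal{L}_\lambda=Q\cdot P$, the same product identity $\prod_k T(w_k)=-(\lambda_1/\lambda_2)\prod_{\ell\ge3}T(z_\ell)$ yielding the congruence modulo $2\pi$, and the same continuity-plus-connectedness argument to fix the branch (your M\"obius/$\arg_H$ bookkeeping is an explicit version of the paper's Lemma~\ref{lem:arg-angle}, and your electrostatic localization of the $w_k$ plays the role of Corollary~\ref{cor:q-in-D}). The only substantive difference is the anchor point: you invoke Ge--Gonek's Theorem~\ref{thm:Ge_Gonek} at $\lambda_j=1/N$, whereas the paper evaluates the constant at the explicit configuration $\mathcal{L}(u)=u^N-1$ with $\lambda_j=1/N$ (so $\mathcal{L}_\lambda(u)=u^{N-1}$), which keeps the argument independent of \cite{GG24} and in fact reproves Theorem~\ref{thm:Ge_Gonek} as a byproduct.
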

	
	\begin{rem}[A zero-weight counterexample]\label{rem:zero-weight-counterexample}
		The positivity assumption $\lambda_j>0$ in Theorem~\ref{thm:incomplete_dual} is essential.
		If one allows some weights to vanish, then \eqref{eq:main-angle} may fail, even if one interprets angles at endpoints using Definition~\ref{def:endpoint}.
		
		For example, let
		\[
		\mathcal{L}(u)=(u-1)(u-i)(u+i),
		\qquad
		(\lambda_1,\lambda_2,\lambda_3)=\Bigl(0,\frac12,\frac12\Bigr).
		\]
		Then
		\[
		\mathcal{L}_\lambda(u)
		=\frac12\frac{\mathcal{L}(u)}{u-i}+\frac12\frac{\mathcal{L}(u)}{u+i}
		=\frac12(u-1)(u+i)+\frac12(u-1)(u-i)
		=u(u-1),
		\]
		so the zeros of $\mathcal{L}_\lambda$ are $w_1=0$ and $w_2=1$.
		Choose the consecutive zeros $z=1$ and $z^{+}=i$ of $\mathcal{L}$, so that $\theta^{+}-\theta=\pi/2$.
		Then
		\[
		\Theta(0;1,i)=\frac{\pi}{2},
		\qquad
		\Theta(1;1,i)=\frac{\pi}{4}
		\quad\text{(by Definition~\ref{def:endpoint})},
		\]
		whence
		\[
		\Theta(w_1;1,i)+\Theta(w_2;1,i)=\frac{3\pi}{4},
		\]
		while the right-hand side of \eqref{eq:main-angle} equals
		\[
		\pi+(3-2)\frac{\pi/2}{2}=\pi+\frac{\pi}{4}=\frac{5\pi}{4}.
		\]
	\end{rem}
	
	\begin{thm}\label{thm:incomplete_gap}
		Assume that $\lambda_1,\dots,\lambda_N>0$ and $\sum_{j=1}^N\lambda_j=1$.
		Let $\zeta_1,\dots,\zeta_M$ be the distinct zeros of $\mathcal{L}(u)$ with arguments
		\[
		0\le \theta_1<\theta_2<\cdots<\theta_M<2\pi,
		\qquad
		\theta_{M+1}:=\theta_1+2\pi,
		\]
		and let $G$ be the largest gap between arguments of consecutive \emph{distinct} zeros of $\mathcal{L}(u)$, i.e.,
		\[
		G=\max_{1\le j\le M}\bigl(\theta_{j+1}-\theta_j\bigr).
		\]
		Let $\mathcal{N}_\varepsilon$ be the number of zeros of $\mathcal{L}_\lambda(u)$ at distance greater than $\varepsilon$ from the unit circle, i.e.,
		\begin{equation}\label{eq:def-Neps}
			\mathcal{N}_\varepsilon
			=\bigl|\{\,w_k:\ \abs{w_k}<1-\varepsilon\,\}\bigr|,
		\end{equation}
		counted with multiplicity.
		Then there exists an absolute constant $c_0>0$ such that
		\[
		\mathcal{N}_\varepsilon \le \frac{c_0}{\varepsilon\,G}.
		\]
		In particular, one may take $c_0=4\pi$.
		(Here ``absolute'' means independent of $N$, $\varepsilon$, $G$, and $\lambda$.)
	\end{thm}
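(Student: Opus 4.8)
The plan is to reproduce Ge and Gonek's passage from Theorem~\ref{thm:Ge_Gonek} to Theorem~\ref{thm:Ge_Gonek_gap}, now feeding in Theorem~\ref{thm:incomplete_dual} in place of Theorem~\ref{thm:Ge_Gonek} and replacing the crude bound $\Theta(\cdot;z,z^{+})\le\pi$ by a quantitative ``angle gain'' estimate. First I would dispose of trivialities: if $\mathcal{L}$ has only one distinct zero then $\mathcal{L}_\lambda=(u-\zeta_1)^{N-1}$, so $\mathcal{N}_\varepsilon=0$; hence assume $M\ge2$, so $0<G<2\pi$, and assume $\mathcal{N}_\varepsilon\ge1$. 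Fix consecutive distinct zeros $z=e^{i\theta}$, $z^{+}=e^{i\theta^{+}}$ of $\mathcal{L}$ realizing the largest gap, so $\theta^{+}-\theta=G$ and the open arc of central angle $G$ joining them carries no zero of $\mathcal{L}$; thus all zeros of $\mathcal{L}$ lie on the closed complementary arc of central angle $2\pi-G$. Since $\mathcal{L}_\lambda(u)/\mathcal{L}(u)=\sum_j\lambda_j/(u-z_j)$ cannot vanish for $u$ outside $\conv\{z_1,\dots,z_N\}$ (the Gauss--Lucas theorem for convex combinations of incomplete polynomials; positivity of the weights is not needed here), every zero $w_k$ of $\mathcal{L}_\lambda$ lies in the closed ``lune'' $R$ bounded by the chord $[z,z^{+}]$ and that complementary arc. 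In particular $\Theta(w_k;z,z^{+})\ge G/2$ for all $k$, with equality only on the arc and the endpoint cases $w_k\in\{z,z^{+}\}$ giving $\Theta=G/2$ via Definition~\ref{def:endpoint}.

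The crux is an angle-gain lemma: if $w\in R$, $|w|\le r<1$, and $r+\cos(G/2)>0$, then
\[
\Theta(w;z,z^{+})\ \ge\ 2\arctan\frac{\sin(G/2)}{\,r+\cos(G/2)\,}
\]
(if $r+\cos(G/2)\le0$ then $R\cap\{|w|\le r\}$ reduces to the chord midpoint, where $|w|=r$, so this case is vacuous for $\mathcal{N}_\varepsilon$). To prove it I would rotate so that $z=e^{-iG/2}$, $z^{+}=e^{iG/2}$; writing $w=a+bi$, membership $w\in R$ forces $p:=\cos(G/2)-a\ge0$, while $|w|\le r$ gives $a\ge-r$ and $b^{2}\le r^{2}-a^{2}$. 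A dot/cross-product computation gives $\cot\Theta(w;z,z^{+})=(p^{2}+b^{2}-\sin^{2}(G/2))/(2p\sin(G/2))$ for $p>0$; substituting $a=\cos(G/2)-p$ and the bound on $b^{2}$ and using $\cos^{2}(G/2)+\sin^{2}(G/2)=1$ shows this is at most $((r^{2}-1)+2p\cos(G/2))/(2p\sin(G/2))$, a quantity increasing in $p$ on $(0,\,r+\cos(G/2)]$ whose value at $p=r+\cos(G/2)$ simplifies to $\cot\!\bigl(2\arctan\tfrac{\sin(G/2)}{r+\cos(G/2)}\bigr)$; as $\cot$ is decreasing on $(0,\pi)$ the inequality follows, and the case $p=0$ puts $w$ on the open chord where $\Theta=\pi$.

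Next I would record, for $0\le\varepsilon<1+\cos(G/2)$ and $0<G<2\pi$, the sharp trigonometric inequality
\[
2\arctan\frac{\sin(G/2)}{(1-\varepsilon)+\cos(G/2)}\ \ge\ \frac{G}{2}+\frac{\varepsilon G}{4}.
\]
Letting $F(\varepsilon)$ be the difference of the two sides, one has $F(0)=0$ (using $\sin(G/2)/(1+\cos(G/2))=\tan(G/4)$), $F'(0)=\tan(G/4)-G/4\ge0$, and $F''(\varepsilon)>0$ on the interval by a short computation, so $F\ge0$ by convexity; this is what makes the constant uniform in $N,\varepsilon,G,\lambda$. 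Applying the angle-gain lemma with $r=1-\varepsilon$ to the $\mathcal{N}_\varepsilon$ zeros $w_k$ with $|w_k|<1-\varepsilon$ and the baseline bound $\Theta\ge G/2$ to the rest, Theorem~\ref{thm:incomplete_dual} gives
\[
\pi+(N-2)\frac{G}{2}=\sum_{k=1}^{N-1}\Theta(w_k;z,z^{+})\ \ge\ (N-1)\frac{G}{2}+\mathcal{N}_\varepsilon\cdot\frac{\varepsilon G}{4},
\]
so $\mathcal{N}_\varepsilon\cdot\varepsilon G/4\le\pi-G/2<\pi$, i.e.\ $\mathcal{N}_\varepsilon<4\pi/(\varepsilon G)$; this is the theorem with $c_0=4\pi$, positivity of the $\lambda_j$ entering only through Theorem~\ref{thm:incomplete_dual}.

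I expect the main obstacle to be the angle-gain lemma together with the extraction of the \emph{linear} lower bound $\tfrac{G}{2}+\tfrac{\varepsilon G}{4}$ with the correct constant $\tfrac14$: one must pin down the lune region $R$ that contains all zeros of $\mathcal{L}_\lambda$, control $\Theta(\cdot;z,z^{+})$ over $R\cap\{|w|\le r\}$ uniformly --- the $\cot$ identity and the monotonicity in $p$ are exactly what let one avoid locating the extremizer --- and then prove the trigonometric inequality; obtaining the constant $\tfrac14$ (hence $c_0=4\pi$) rather than a worse one is precisely what the convexity/derivative argument secures.
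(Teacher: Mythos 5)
Your proposal is correct, and its overall architecture coincides with the paper's: dispose of $M=1$, pick the maximal-gap pair, confine all zeros of $\mathcal{L}_\lambda$ to the lune $\Dshape(z,z^{+})$ via the Gauss--Lucas theorem for incomplete polynomials, use $\Theta\ge G/2$ as a baseline, invoke the angle duality (Theorem~\ref{thm:incomplete_dual}) for the total, and quantify an angle gain for zeros with $|w_k|<1-\varepsilon$. Where you genuinely diverge is in the angle-gain step. The paper's Lemma~\ref{lem:angle-gain} is synthetic (extend the line $uz^{+}$ to a second circle point $A$, apply the inscribed angle theorem and the law of sines, and bound $|uz|$ by $2$ or by the chord length via Lemma~\ref{lem:D-diameter}); it produces the gain $\tfrac{\varepsilon}{2}\sin(\alpha/2)$ for $\alpha\le\pi$ and $\tfrac{\varepsilon}{2}$ for $\alpha>\pi$, which then forces a two-case endgame with the elementary inequality $(\pi-x)/\sin x\le\pi/x$. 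Your route is computational: the cotangent identity $\cot\Theta=(p^{2}+b^{2}-\sin^{2}(G/2))/(2p\sin(G/2))$, monotonicity in $p$, and the resulting sharp closed form $\Theta\ge 2\arctan\bigl(\sin(G/2)/(r+\cos(G/2))\bigr)$ (attained at $w=-r$ after your rotation), followed by the convexity inequality $2\arctan\bigl(\sin(G/2)/((1-\varepsilon)+\cos(G/2))\bigr)\ge G/2+\varepsilon G/4$. This buys a uniform linear gain $\varepsilon G/4$ with no case split on $G\lessgtr\pi$, so the conclusion $\mathcal{N}_\varepsilon\,\varepsilon G/4\le\pi-G/2<\pi$ drops out in one line with the same constant $c_0=4\pi$; the cost is a less geometric, more calculus-driven verification (the $\cot$ computation and the check $F(0)=0$, $F'(0)=\tan(G/4)-G/4\ge 0$, $F''>0$), all of which I have checked and which are sound, including your treatment of the degenerate regime $r+\cos(G/2)\le 0$ where the relevant set is empty.
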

		
	\begin{rem}\label{rem:sendov-fails}
		Ge and Gonek \cite{GG24} used their angle-duality identity to strengthen Sendov's conjecture for polynomials whose zeros lie on the unit circle.
		However, a direct Sendov-type analogue fails for convex combinations of incomplete polynomials (see Example~\ref{eg} below). 	
	\end{rem}
	More precisely, even when all zeros of $\mathcal{L}$ lie on $\T$ and all weights are strictly positive,
		it is \emph{not} true in general that for every zero $z$ of $\mathcal{L}$ there must exist a zero $w$ of $\mathcal{L}_\lambda$ with
		\[
		|z-w|\le 1.
		\]
		Here is an explicit example.
\begin{eg}\label{eg}
		Let
	\[
	\mathcal{L}(u)=u^3-1=(u-1)(u-\omega)(u-\omega^2),
	\qquad \omega=e^{2\pi i/3},
	\]
	and choose weights
	\[
	(\lambda_1,\lambda_2,\lambda_3)=\Bigl(\frac45,\frac1{10},\frac1{10}\Bigr).
	\]
	Then a direct computation gives
	\[
	\mathcal{L}_\lambda(u)
	=\sum_{j=1}^3 \lambda_j\,\frac{\mathcal{L}(u)}{u-z_j}
	= u^2+\frac{7}{10}u+\frac{7}{10}.
	\]
	Hence the zeros of $\mathcal{L}_\lambda$ are
	\[
	w_{\pm}=-\frac{7}{20}\pm i\,\frac{\sqrt{231}}{20}.
	\]
	For the zero $z=1$ of $\mathcal{L}$ we have
	\[
	|1-w_{\pm}|^2=\Bigl(1+\frac{7}{20}\Bigr)^2+\Bigl(\frac{\sqrt{231}}{20}\Bigr)^2
	=\frac{12}{5},
	\qquad\text{so}\qquad |1-w_{\pm}|=\sqrt{\frac{12}{5}}>1.
	\]
	Thus \emph{neither} zero of $\mathcal{L}_\lambda$ lies in the closed unit disk centered at $z=1$,
	showing that the Sendov-type radius-$1$ conclusion does not extend to $\mathcal{L}_\lambda$.
\end{eg}		
	
		\medskip
\noindent\textbf{Organization of the paper.}
In Section~\ref{sec:Preliminaries}, we introduce the $\Dshape$-region and collect several elementary geometric facts about subtended angles and argument differences.
In Section~\ref{sec:Proof_of_thm_incomplete_dual}, we prove the angle duality for convex combinations of incomplete polynomials (Theorem~\ref{thm:incomplete_dual}) by an argument-variation identity and a continuity argument to remove the $2\pi$ ambiguity.
In Section~\ref{sec:Two geometric lemmas for the gap principle}, we establish two geometric lemmas, including a quantitative ``angle gain'' away from the unit circle.
Finally, in Section~\ref{sec:Proof of Theorem incomplete_gap}, we combine the angle duality with the geometric estimates to prove the gap principle for $\mathcal{L}_\lambda$ (Theorem~\ref{thm:incomplete_gap}).

	\section{Preliminaries}\label{sec:Preliminaries}
	
	\begin{defin}[$\Dshape$-region]\label{def:D-region}
		Let $z=e^{i\theta}$ and $z^{+}=e^{i\theta^{+}}$ be distinct points on $\T$ with $\theta<\theta^{+}<\theta+2\pi$.
		Let $H(z,z^{+})$ be the closed half-plane bounded by the line through $z$ and $z^{+}$ which contains the counterclockwise arc on $\T$ from $z^{+}$ to $z$ (including endpoints).
		Define
		\[
		\Dshape(z,z^{+}) := \udisc \cap H(z,z^{+}),
		\]see Figure~\ref{fig:D-region}.
	\end{defin}
	\begin{figure}[htbp]
		\centering
		\begin{tikzpicture}[scale=3,>=Stealth]
			
			\coordinate (O) at (0,0);
			
			\def\th{10}      
			\def\thp{115}    
			
			\coordinate (z)  at (\th:1);
			\coordinate (zp) at (\thp:1);
			
			\path[fill=blue!15]
			(z) -- (zp)
			arc[start angle=\thp, end angle=\th+360, radius=1]
			-- cycle;
			
			\draw[thick] (O) circle (1);
			\draw[thick] (z) -- (zp);
			
			\fill[red] (z)  circle (0.015);
			\fill[red] (zp) circle (0.015);
			
			\node[red,anchor=west]  at ($(z)+(0.06,-0.02)$) {$z=e^{i\theta}$};
			\node[red,anchor=south] at ($(zp)+(0.00,0.06)$) {$z^{+}=e^{i\theta^{+}}$};
			
			\draw[densely dashed,->]
			(\thp:1) arc[start angle=\thp, end angle=\th+360, radius=1];
			\node[font=\scriptsize,anchor=east] at (-1.10,0.05) {counterclockwise arc $z^{+}\to z$};
			
			\node[font=\small] at (-0.15,0.05) {$\Dshape(z,z^{+})$};
			
			\coordinate (u) at (0.05,0.30);
			\fill (u) circle (0.012);
			\node[anchor=west] at ($(u)+(0.05,0.02)$) {$u\in\Dshape(z,z^{+})$};
			
		\end{tikzpicture}
		\caption{A schematic illustration of the $\Dshape$-region $\Dshape(z,z^{+})=\udisc\cap H(z,z^{+})$.}
		\label{fig:D-region}
	\end{figure}

	A key ingredient is that the zeros of $p_\lambda$ satisfy the same convex-hull confinement as critical points; see \cite[Theorem~1.2]{DE08} or \cite[Theorem~1.3]{Zha24}. Let $\conv\{z_1,\dots,z_N\}$ denote the convex hull of $z_1,\dots,z_N$.

	\begin{lem}[\cite{DE08,Zha24}]\label{lem:q-conv}
		Let $p(u)=\prod_{j=1}^N(u-z_j)$ be monic and let
	$	p_\lambda(u)=\sum_{j=1}^N\lambda_j\,p_j(u)$
		with $\lambda_j\ge 0$ and $\sum_{j=1}^N\lambda_j=1$.
		Then every zero of $p_\lambda$ lies in $\conv\{z_1,\dots,z_N\}$.
		In particular, every zero of $\mathcal{L}_\lambda$ lies in $\udisc$.
	\end{lem}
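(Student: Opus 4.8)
The plan is to adapt the classical logarithmic-derivative proof of the Gauss--Lucas theorem, replacing $p'/p$ by the weighted Cauchy-kernel sum attached to $p_\lambda$. First I would record that for every $u\in\C$ with $p(u)\neq 0$,
\[
p_\lambda(u)=\sum_{j=1}^N\lambda_j\,\frac{p(u)}{u-z_j}=p(u)\sum_{j=1}^N\frac{\lambda_j}{u-z_j},
\]
so that if $w$ is a zero of $p_\lambda$ with $p(w)\neq 0$, then necessarily $\sum_{j=1}^N \lambda_j/(w-z_j)=0$.

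Next I would split into two cases according to whether $w$ is itself a zero of $p$. If $p(w)=0$, then $w\in\{z_1,\dots,z_N\}$, which lies in $\conv\{z_1,\dots,z_N\}$ trivially, so there is nothing to prove. If $p(w)\neq 0$, I take complex conjugates in the identity $\sum_{j}\lambda_j/(w-z_j)=0$ and use $\overline{(w-z_j)^{-1}}=(w-z_j)/\abs{w-z_j}^2$ (valid since $w\neq z_j$ for all $j$) to obtain $\sum_{j}\mu_j(w-z_j)=0$, where $\mu_j:=\lambda_j/\abs{w-z_j}^2\ge 0$. Rearranging gives $\bigl(\sum_{j}\mu_j\bigr)w=\sum_{j}\mu_j z_j$. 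Since $\sum_j\lambda_j=1$, some weight $\lambda_{j_0}$ is strictly positive, and $\abs{w-z_{j_0}}>0$ because $p(w)\neq 0$; hence $\mu_{j_0}>0$ and $\sum_j\mu_j>0$. Dividing, $w=\sum_{j}\bigl(\mu_j/\sum_i\mu_i\bigr)z_j$ exhibits $w$ as a convex combination of $z_1,\dots,z_N$, so $w\in\conv\{z_1,\dots,z_N\}$. This proves the first assertion; for the final claim that every zero of $\mathcal{L}_\lambda$ lies in $\udisc$, note that all zeros of $\mathcal{L}$ lie on $\T\subseteq\udisc$ and that $\udisc$ is convex, so $\conv\{z_1,\dots,z_N\}\subseteq\udisc$.

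I do not expect a genuine obstacle here: the content is the one-line barycentric computation above, and the only point requiring a moment's care is the verification that $\sum_j\mu_j>0$, i.e.\ that at least one $\mu_j$ is strictly positive. This is exactly where the normalization $\sum_j\lambda_j=1$ (which forces some $\lambda_{j_0}>0$) and the case assumption $w\notin\{z_1,\dots,z_N\}$ enter, and it is what rules out the degenerate identity $0=0$. Alternatively, one may simply invoke \cite[Theorem~1.2]{DE08} or \cite[Theorem~1.3]{Zha24}.
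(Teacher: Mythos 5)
Your argument is correct: the identity $p_\lambda(u)=p(u)\sum_j\lambda_j/(u-z_j)$ off the zero set of $p$, the conjugation trick yielding $w=\sum_j(\mu_j/\sum_i\mu_i)z_j$ with $\mu_j=\lambda_j/\abs{w-z_j}^2$, and the observation that $\sum_j\mu_j>0$ are all sound, and the case $p(w)=0$ is handled trivially. The paper itself gives no proof of this lemma --- it simply cites \cite[Theorem~1.2]{DE08} and \cite[Theorem~1.3]{Zha24} --- and your barycentric computation is exactly the standard Gauss--Lucas-type argument underlying those references, so the proposal is a correct, self-contained substitute for the citation.
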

As an immediate corollary of Lemma~\ref{lem:q-conv}, we obtain
	\begin{cor}\label{cor:q-in-D}
		Let $z=e^{i\theta}$ and $z^{+}=e^{i\theta^{+}}$ be consecutive \emph{distinct} zeros of $\mathcal{L}$.
		Then every zero of $\mathcal{L}_\lambda$ lies in $\Dshape(z,z^{+})$.
	\end{cor}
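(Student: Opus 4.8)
The plan is to deduce this directly from Lemma~\ref{lem:q-conv} together with the observation that the full convex hull $\conv\{z_1,\dots,z_N\}$ already sits inside $\Dshape(z,z^{+})$. First I would recall that, applying Lemma~\ref{lem:q-conv} with $p=\mathcal{L}$ and $p_\lambda=\mathcal{L}_\lambda$, every zero of $\mathcal{L}_\lambda$ lies in $\conv\{z_1,\dots,z_N\}$. Thus it suffices to prove the set inclusion $\conv\{z_1,\dots,z_N\}\subseteq\Dshape(z,z^{+})=\udisc\cap H(z,z^{+})$.

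The inclusion $\conv\{z_1,\dots,z_N\}\subseteq\udisc$ is immediate: each $z_j\in\T\subseteq\udisc$, and $\udisc$ is convex. For the half-plane part, the key point is that $z=\zeta_j$ and $z^{+}=\zeta_{j+1}$ are \emph{consecutive} distinct zeros of $\mathcal{L}$. By the ordering $0\le\theta_1<\theta_2<\cdots<\theta_M<2\pi$ (with cyclic indexing) together with the convention $\theta<\theta^{+}<\theta+2\pi$, no zero of $\mathcal{L}$ has argument lying strictly in the open interval $(\theta,\theta^{+})$ modulo $2\pi$. Hence every $z_j$ lies on the closed counterclockwise arc of $\T$ running from $z^{+}$ to $z$, i.e.\ the arc with arguments in $[\theta^{+},\theta+2\pi]$. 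By the very definition of $H(z,z^{+})$ (Definition~\ref{def:D-region}), this arc is contained in $H(z,z^{+})$; and since $H(z,z^{+})$ is a closed half-plane, hence convex, it follows that $\conv\{z_1,\dots,z_N\}\subseteq H(z,z^{+})$.

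Combining the two inclusions gives $\conv\{z_1,\dots,z_N\}\subseteq\udisc\cap H(z,z^{+})=\Dshape(z,z^{+})$, and therefore every zero of $\mathcal{L}_\lambda$ lies in $\Dshape(z,z^{+})$, as claimed. There is essentially no serious obstacle here: the statement is a genuine corollary, and the only point to keep straight is the orientation convention, namely that $H(z,z^{+})$ is, by construction, the side of the chord through $z$ and $z^{+}$ that contains the counterclockwise arc from $z^{+}$ to $z$ — which is precisely the arc carrying all the zeros of $\mathcal{L}$ once $z$ and $z^{+}$ are taken to be consecutive. No degeneracy issue arises because $z\neq z^{+}$, so the chord, and hence the half-plane $H(z,z^{+})$, is well defined.
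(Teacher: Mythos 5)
Your proof is correct and is precisely the argument the paper intends: the paper gives no written proof, calling the corollary ``immediate'' from Lemma~\ref{lem:q-conv}, and your write-up simply spells out the two convexity inclusions (convex hull inside $\udisc$, and inside $H(z,z^{+})$ because all zeros lie on the closed counterclockwise arc from $z^{+}$ to $z$ when $z,z^{+}$ are consecutive). Nothing is missing and the orientation convention is handled correctly.
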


	\begin{lem}[Argument difference equals subtended angle]\label{lem:arg-angle}
		Let $z,z^{+}\in\C$ be distinct and let $w\in\C\setminus\{z,z^{+}\}$.
		Then
		\[
		\arg(z^{+}-w)-\arg(z-w)\equiv \pm\,\Theta(w;z,z^{+})\pmod{2\pi}.
		\]
		Moreover, if $z,z^{+}\in\T$ and $w\in \udisc\cap H(z,z^{+})$ (in particular, if $w\in\Dshape(z,z^{+})$), then the unique representative of $\arg(z^{+}-w)-\arg(z-w)$ in $(0,\pi]$ equals $\Theta(w;z,z^{+})$.
		If instead $w\in \udisc\setminus H(z,z^{+})$, then the unique representative in $[-\pi,0)$ equals $-\Theta(w;z,z^{+})$ (equivalently, the representative in $(\pi,2\pi)$ equals $2\pi-\Theta(w;z,z^{+})$).
	\end{lem}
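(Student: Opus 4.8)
The plan is to pass to the single nonzero complex number $\rho(w):=\frac{z^{+}-w}{z-w}$, which is finite and nonzero because $w\neq z$, $w\neq z^{+}$, and $z\neq z^{+}$, and to extract both assertions from its real and imaginary parts. For the congruence statement I would use $\arg(z^{+}-w)-\arg(z-w)\equiv\arg\rho(w)\pmod{2\pi}$, so that it suffices to show the principal value $\arg\rho(w)\in(-\pi,\pi]$ has absolute value $\Theta(w;z,z^{+})$. Writing $a=z-w$ and $b=z^{+}-w$, this is the law of cosines: $\cos\bigl(\arg\rho(w)\bigr)=\operatorname{Re}(\overline{a}\,b)/\bigl(\abs{a}\,\abs{b}\bigr)$ is exactly the cosine of the unoriented angle between $\overrightarrow{wz}$ and $\overrightarrow{wz^{+}}$, and since $\Theta(w;z,z^{+})\in[0,\pi]$ while $\arg\rho(w)\in(-\pi,\pi]$, equality of cosines forces $\abs{\arg\rho(w)}=\Theta(w;z,z^{+})$; hence $\arg(z^{+}-w)-\arg(z-w)\equiv\pm\,\Theta(w;z,z^{+})\pmod{2\pi}$.

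To fix the sign I would introduce the signed-area functional $D(w):=\operatorname{Im}\bigl((z^{+}-w)\overline{(z-w)}\bigr)$, which has the same sign as $\operatorname{Im}\rho(w)$ and vanishes exactly when $w$ lies on the line $\ell$ through $z$ and $z^{+}$. Elementary facts about the principal argument then give the trichotomy: $D(w)>0\Rightarrow\arg\rho(w)\in(0,\pi)$; $D(w)<0\Rightarrow\arg\rho(w)\in(-\pi,0)$; and $D(w)=0\Rightarrow\arg\rho(w)\in\{0,\pi\}$, where $\arg\rho(w)=\pi$ precisely when $\rho(w)$ is a negative real, i.e.\ when $w$ lies strictly between $z$ and $z^{+}$ on the chord $[z,z^{+}]$. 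The geometric heart of the matter is to identify the half-plane with a level set of $D$: the line $\ell=\{D=0\}$ splits $\C$ into the two open half-planes $\{D>0\}$ and $\{D<0\}$, and if $w_{0}$ is any point in the relative interior of the counterclockwise arc on $\T$ from $z^{+}$ to $z$, then $w_{0}\in H(z,z^{+})$ by definition and $w_{0}\notin\ell$ (since $\ell\cap\T=\{z,z^{+}\}$), while $z,z^{+},w_{0}$ occur in counterclockwise cyclic order on $\T$, so the inscribed triangle $(w_{0},z,z^{+})$ is positively oriented and $D(w_{0})>0$. Therefore $\operatorname{int}H(z,z^{+})=\{D>0\}$ and $\C\setminus H(z,z^{+})=\{D<0\}$.

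The three cases of the refined statement then follow mechanically. If $w\in\udisc\cap H(z,z^{+})$ with $w\notin\ell$, then $w\in\operatorname{int}H(z,z^{+})$, so $D(w)>0$, hence $\arg\rho(w)\in(0,\pi)$ and $\arg\rho(w)=\Theta(w;z,z^{+})$ by the first paragraph, so the representative of $\arg(z^{+}-w)-\arg(z-w)$ in $(0,\pi]$ is $\Theta(w;z,z^{+})$. If instead $w\in\udisc\cap H(z,z^{+})$ lies on $\ell$, then because $\ell\cap\udisc=[z,z^{+}]$ (a line through two points of $\T$ meets $\udisc$ in the chord) and $w\neq z,z^{+}$, the point $w$ is interior to the chord, whence $D(w)=0$, $\rho(w)<0$, and $\arg\rho(w)=\pi=\Theta(w;z,z^{+})$---again the representative in $(0,\pi]$. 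Finally, if $w\in\udisc\setminus H(z,z^{+})$ then $w\notin\ell$ and $D(w)<0$, so $\arg\rho(w)\in(-\pi,0)$ and $\arg\rho(w)=-\Theta(w;z,z^{+})$ is the representative in $[-\pi,0)$; equivalently $\arg\rho(w)+2\pi=2\pi-\Theta(w;z,z^{+})\in(\pi,2\pi)$.

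The step I expect to be the main obstacle is the bookkeeping in the second paragraph: matching the combinatorial orientation built into the definition of $H(z,z^{+})$ (``the half-plane containing the counterclockwise arc from $z^{+}$ to $z$'') with the analytic sign of $D$, and tracking the principal argument at the degenerate values $0$ and $\pi$---in particular confirming that points on the open chord $(z,z^{+})$, where the unoriented angle is exactly $\pi$, contribute the representative $\pi$ rather than $0$. If one prefers to avoid invoking the orientation fact, one can instead verify $D(w_{0})>0$ directly by a sum-to-product computation for $w_{0}=e^{i\phi_{0}}$ with $\theta^{+}<\phi_{0}<\theta+2\pi$. The remaining ingredients---the law of cosines, and the description of $\arg$ via the signs of $\operatorname{Re}$ and $\operatorname{Im}$---are entirely routine.
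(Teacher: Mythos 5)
Your proposal is correct and follows essentially the same route as the paper: both reduce the unoriented angle $\Theta(w;z,z^{+})$ to the absolute value of the oriented angle $\arg\bigl((z^{+}-w)/(z-w)\bigr)$, determine the sign by which side of the line through $z$ and $z^{+}$ the point $w$ lies on, and handle the collinear case via $\ell\cap\udisc=[z,z^{+}]$. The only difference is cosmetic: where the paper argues that the sign is constant on each open half-plane and identifies it geometrically, you make the same correspondence explicit through the signed-area functional $D$ evaluated at a test point on the arc.
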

	\begin{proof}
		The difference $\arg(z^{+}-w)-\arg(z-w)$ is the oriented angle from the ray from $w$ to $z$ to the ray from $w$ to $z^{+}$, hence it is congruent to $\pm\,\Theta(w;z,z^{+})$ modulo $2\pi$.
		
		Assume now that $z,z^{+}\in\T$.
		The line through $z$ and $z^{+}$ divides the plane into two open half-planes.
		On each open half-plane, the sign of the oriented angle from the ray from $w$ to $z$ to the ray from $w$ to $z^{+}$ is constant (it can only change when $w$ crosses the line through $z$ and $z^{+}$).
		By definition, $H(z,z^{+})$ is the \emph{closed} half-plane containing the counterclockwise arc from $z^{+}$ to $z$, so for $w$ in the interior of $H(z,z^{+})$ the ray from $w$ to $z^{+}$ is obtained from the ray from $w$ to $z$ by a counterclockwise rotation; thus the oriented angle lies in $(0,\pi]$ and its representative in $(0,\pi]$ equals $\Theta(w;z,z^{+})$.
		For $w$ in the opposite open half-plane, the rotation is clockwise, giving the stated representatives in $[-\pi,0)$ and $(\pi,2\pi)$.
		
		If, in addition, $w\in\udisc$ lies on the line through $z$ and $z^{+}$, then necessarily $w\in [z,z^{+}]$.
		In this case the vectors $z-w$ and $z^{+}-w$ are opposite, so the representative of $\arg(z^{+}-w)-\arg(z-w)$ in $(0,\pi]$ equals $\pi$, matching $\Theta(w;z,z^{+})=\pi$.
	\end{proof}
	
	\begin{lem}[Half-gap on the unit circle]\label{lem:half-gap}
		Let $z=e^{i\theta}$ and $z^{+}=e^{i\theta^{+}}$ with $\theta<\theta^{+}<\theta+2\pi$.
		If $w=e^{it}$ lies on the unit circle, $w\neq z,z^{+}$, and $w$ is not on the open arc from $z$ to $z^{+}$ (counterclockwise), then
		\[
		\Theta(w;z,z^{+})=\frac{\theta^{+}-\theta}{2}.
		\]
	\end{lem}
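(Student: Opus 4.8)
The plan is to recognize this as the quantitative inscribed angle theorem and to prove it by a direct argument computation. Geometrically, the chord $[z,z^{+}]$ cuts $\T$ into the counterclockwise arc from $z$ to $z^{+}$, whose central angle is $\theta^{+}-\theta$, together with its complement; the hypothesis ``$w\neq z,z^{+}$ and $w$ is not on the open counterclockwise arc from $z$ to $z^{+}$'' places $w$ on the complementary (open) arc, so the inscribed angle at $w$ should be $\tfrac12(\theta^{+}-\theta)$. I would make this rigorous with arguments rather than invoke inscribed-angle folklore, since the paper develops angles via $\arg$.

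First I would normalize the position of $w$. Because $w=e^{it}$ is distinct from $z,z^{+}$ and avoids the open counterclockwise arc from $z$ to $z^{+}$, we may choose the representative $t$ with $\theta^{+}<t<\theta+2\pi$. Next I would apply the factorization $e^{i\alpha}-e^{i\beta}=2i\sin\!\big(\tfrac{\alpha-\beta}{2}\big)e^{i(\alpha+\beta)/2}$ to write
\[
z-w=2i\sin\!\Big(\tfrac{\theta-t}{2}\Big)e^{i(\theta+t)/2},\qquad
z^{+}-w=2i\sin\!\Big(\tfrac{\theta^{+}-t}{2}\Big)e^{i(\theta^{+}+t)/2},
\]
both nonzero since $w\neq z,z^{+}$. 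The bounds $\theta<\theta^{+}<\theta+2\pi$ and $\theta^{+}<t<\theta+2\pi$ force $\tfrac{\theta-t}{2}\in(-\pi,0)$ and $\tfrac{\theta^{+}-t}{2}\in(-\pi,0)$, so \emph{both} sine factors are strictly negative. Hence $2i\sin(\cdot)$ points in the direction of $-i$ in each case, and
\[
\arg(z-w)\equiv\frac{\theta+t}{2}-\frac{\pi}{2},\qquad
\arg(z^{+}-w)\equiv\frac{\theta^{+}+t}{2}-\frac{\pi}{2}\pmod{2\pi}.
\]
Subtracting, $\arg(z^{+}-w)-\arg(z-w)\equiv\tfrac{\theta^{+}-\theta}{2}\pmod{2\pi}$, and since $0<\tfrac{\theta^{+}-\theta}{2}<\pi$ this is already its representative in $(0,\pi)$.

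To conclude I would invoke Lemma~\ref{lem:arg-angle}: the point $w\in\T\subset\udisc$ lies on the closed counterclockwise arc from $z^{+}$ to $z$, hence in $H(z,z^{+})$, so the representative of $\arg(z^{+}-w)-\arg(z-w)$ in $(0,\pi]$ equals $\Theta(w;z,z^{+})$; comparing with the previous paragraph yields $\Theta(w;z,z^{+})=\tfrac{\theta^{+}-\theta}{2}$. (One can also bypass Lemma~\ref{lem:arg-angle}: since the line through $z$ and $z^{+}$ meets $\T$ only at $z$ and $z^{+}$, the vectors $z-w$ and $z^{+}-w$ are never anti-parallel, so $\Theta(w;z,z^{+})$ is simply the principal value of that argument difference.)

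The only delicate points are the normalization of $t$ from the arc hypothesis and the sign bookkeeping for the two $2i\sin(\cdot)$ factors; once $\theta^{+}<t<\theta+2\pi$ is in place the rest is the elementary interval arithmetic above, so I do not anticipate a genuine obstacle.
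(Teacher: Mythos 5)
Your proposal is correct. It does, however, take a genuinely different (and more self-contained) route than the paper: the paper's proof of Lemma~\ref{lem:half-gap} is a one-sentence appeal to the classical inscribed angle theorem, identifying the "opposite arc" as the counterclockwise arc from $z$ to $z^{+}$ of measure $\theta^{+}-\theta$, whereas you in effect \emph{prove} the inscribed angle theorem for this configuration from scratch via the half-angle factorization $e^{i\alpha}-e^{i\beta}=2i\sin\!\bigl(\tfrac{\alpha-\beta}{2}\bigr)e^{i(\alpha+\beta)/2}$ and then pass from the oriented argument difference to the unoriented angle $\Theta$ using Lemma~\ref{lem:arg-angle}. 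Your interval arithmetic checks out: the hypothesis forces the representative $t\in(\theta^{+},\theta+2\pi)$, both half-angle arguments $\tfrac{\theta-t}{2},\tfrac{\theta^{+}-t}{2}$ land in $(-\pi,0)$ so both sine factors are negative, and the resulting difference $\tfrac{\theta^{+}-\theta}{2}$ lies in $(0,\pi)$, so it is its own representative in $(0,\pi]$ and Lemma~\ref{lem:arg-angle} (whose hypotheses hold since $w$ lies on the closed arc from $z^{+}$ to $z$, hence in $\udisc\cap H(z,z^{+})$) converts it to $\Theta(w;z,z^{+})$. What your approach buys is rigor consistent with the paper's $\arg$-based framework and independence from geometric folklore; what the paper's approach buys is brevity, at the cost of leaving the identification of the correct arc (and the degenerate-looking boundary orientation) to the reader's geometric intuition.
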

	
	\begin{proof}
		This is the inscribed angle theorem: the inscribed angle subtending the chord $[z,z^{+}]$ equals one half of the measure of the opposite arc.
		Under the hypothesis, the opposite arc is precisely the counterclockwise arc from $z$ to $z^{+}$ of length $\theta^{+}-\theta$.
	\end{proof}
	
	\section{Proof of Theorem~\ref{thm:incomplete_dual}} \label{sec:Proof_of_thm_incomplete_dual}
	\begin{proof}[Proof of Theorem~\ref{thm:incomplete_dual}]
		\textit{Step 1: the case of pairwise distinct zeros.}
		Assume first that $z_1,\dots,z_N$ are pairwise distinct. Since the zeros are distinct, we may (and do) relabel them so that
		\[
		0\le \arg z_1<\arg z_2<\cdots<\arg z_N<2\pi,
		\qquad
		\arg z_{N+1}:=\arg z_1+2\pi,
		\]
		and we use cyclic indexing. In particular, for a consecutive pair we write
		$z=z_j$ and $z^{+}=z_{j+1}$.
		
		Let $z=z_j$ and $z^{+}=z_{j+1}$ be consecutive.
		Write
		\[
		\mathcal{L}_\lambda(u)=\prod_{k=1}^{N-1}(u-w_k).
		\]
		Then
		\begin{equation}\label{eq:ratio-zeros}
			\frac{\mathcal{L}_\lambda(z^{+})}{\mathcal{L}_\lambda(z)}=\prod_{k=1}^{N-1}\frac{z^{+}-w_k}{z-w_k}.
		\end{equation}
		
		At the (simple) zero $z_j$ of $\mathcal{L}$ we have $\mathcal{L}_\ell(z_j)=0$ for $\ell\neq j$, while $\mathcal{L}_j(z_j)\neq 0$; hence
		\[
		\mathcal{L}_\lambda(z)=\lambda_j \mathcal{L}_j(z),\qquad \mathcal{L}_\lambda(z^{+})=\lambda_{j+1}\mathcal{L}_{j+1}(z^{+}).
		\]
		Therefore,
		\[
		\frac{\mathcal{L}_\lambda(z^{+})}{\mathcal{L}_\lambda(z)}
		=\frac{\lambda_{j+1}}{\lambda_j}\cdot\frac{\mathcal{L}_{j+1}(z^{+})}{\mathcal{L}_j(z)}
		=-\frac{\lambda_{j+1}}{\lambda_j}\prod_{\ell\neq j,j+1}\frac{z^{+}-z_\ell}{z-z_\ell}.
		\]
		Combining with \eqref{eq:ratio-zeros} gives
		\begin{equation}\label{eq:prod-id}
			\prod_{k=1}^{N-1}\frac{z^{+}-w_k}{z-w_k}
			=-\frac{\lambda_{j+1}}{\lambda_j}\prod_{\ell\neq j,j+1}\frac{z^{+}-z_\ell}{z-z_\ell}.
		\end{equation}
		
		Taking arguments on both sides of \eqref{eq:prod-id} yields a congruence modulo $2\pi$.
		Since $\lambda_{j+1}/\lambda_j>0$ contributes no argument and $-1$ contributes $\pi$, we obtain
		\begin{equation}\label{eq:arg-cong}
			\sum_{k=1}^{N-1}\Big(\arg(z^{+}-w_k)-\arg(z-w_k)\Big)
			\equiv
			\pi+\sum_{\ell\neq j,j+1}\Big(\arg(z^{+}-z_\ell)-\arg(z-z_\ell)\Big)
			\pmod{2\pi}.
		\end{equation}
		
		By Corollary~\ref{cor:q-in-D}, each $w_k\in \Dshape(z,z^{+})=\udisc\cap H(z,z^{+})$; hence by Lemma~\ref{lem:arg-angle}, the unique representative of each left-hand summand in $(0,\pi]$ equals $\Theta(w_k;z,z^{+})$.
		On the right-hand side, each $z_\ell$ lies on the unit circle and, because $z$ and $z^{+}$ are consecutive, lies outside the open arc from $z$ to $z^{+}$; thus Lemma~\ref{lem:half-gap} gives $\Theta(z_\ell;z,z^{+})=(\theta^{+}-\theta)/2$ for all $\ell\neq j,j+1$.
		Therefore \eqref{eq:arg-cong} implies
		\begin{equation}\label{eq:cong-final}
			\sum_{k=1}^{N-1}\Theta(w_k;z,z^{+})
			\equiv
			\pi+(N-2)\frac{\theta^{+}-\theta}{2}
			\pmod{2\pi}.
		\end{equation}
		
		It remains to remove the ambiguity modulo $2\pi$.
		By rotating the variable and relabeling indices cyclically, we may assume $z=z_1=1$ and $z^{+}=z_2$.
		Consider the connected parameter space
		\[
		\Omega=\Big\{(\boldsymbol{\theta},\lambda):\ 0=\theta_1<\theta_2<\cdots<\theta_N<2\pi,\ \lambda\in\Delta^\circ\Big\},
		\]
		where $\boldsymbol{\theta}=(\theta_1,\dots,\theta_N)$ and
		\[
		\Delta^\circ=\bigl\{(\lambda_1,\dots,\lambda_N)\in(0,1)^N:\ \sum_{j=1}^N\lambda_j=1\bigr\}.
		\]
		For each point of $\Omega$ define $z_j=e^{i\theta_j}$, the polynomial $\mathcal{L}(u)=\prod_{j=1}^N(u-z_j)$, and $\mathcal{L}_\lambda(u)=\sum_{j=1}^N\lambda_j\mathcal{L}(u)/(u-z_j)$, with consecutive pair $(z,z^{+})=(z_1,z_2)$.
		Set
		\[
		\Phi(\boldsymbol{\theta},\lambda):=
		\sum_{k=1}^{N-1}\Theta(w_k;z,z^{+})
		-\Big(\pi+(N-2)\frac{\theta_2-\theta_1}{2}\Big).
		\]
		By \eqref{eq:cong-final}, $\Phi(\boldsymbol{\theta},\lambda)\in 2\pi\mathbb{Z}$ for all $(\boldsymbol{\theta},\lambda)\in\Omega$.
		
		Since $\lambda_j>0$ and the zeros $z_j$ are distinct, we have $\mathcal{L}_\lambda(z_1)\neq 0$ and $\mathcal{L}_\lambda(z_2)\neq 0$, so none of the zeros $w_k$ coincides with $z$ or $z^{+}$.
		As the coefficients of $\mathcal{L}_\lambda$ depend continuously on $(\boldsymbol{\theta},\lambda)$, it is well known that the roots $\{w_1,\dots,w_{N-1}\}$ (counting multiplicity) depend continuously on $(\boldsymbol{\theta},\lambda)$ as an unordered multiset.
		Since the function $w\mapsto \Theta(w;z,z^{+})$ is continuous on $\C\setminus\{z,z^{+}\}$, the symmetric sum $\sum_{k=1}^{N-1}\Theta(w_k;z,z^{+})$ varies continuously with the multiset of roots.
		Consequently, $\Phi$ is continuous on $\Omega$.
		Since $\Omega$ is connected and $2\pi\mathbb{Z}$ is discrete, $\Phi$ is constant on $\Omega$.
		
		To identify the constant, take $\theta_j=2\pi(j-1)/N$ and $\lambda_j=1/N$.
		Then $\mathcal{L}(u)=u^N-1$ and $\mathcal{L}_\lambda(u)=\mathcal{L}'(u)/N=u^{N-1}$, so $w_1=\cdots=w_{N-1}=0$.
		For $z=1$ and $z^{+}=e^{2\pi i/N}$ we have $\Theta(0;z,z^{+})=2\pi/N$, hence
		\[
		\sum_{k=1}^{N-1}\Theta(w_k;z,z^{+})=(N-1)\frac{2\pi}{N}=2\pi-\frac{2\pi}{N},
		\]
		while
		\[
		\pi+(N-2)\frac{\theta^{+}-\theta}{2}
		=\pi+(N-2)\frac{\pi}{N}=2\pi-\frac{2\pi}{N}.
		\]
		Thus $\Phi=0$ at this point, and hence $\Phi\equiv 0$ on $\Omega$.
		This proves \eqref{eq:main-angle} when the zeros of $\mathcal{L}$ are pairwise distinct.
		
		\medskip
		\textit{Step 2: allowing multiple zeros.}
		Now let $\mathcal{L}$ have (possibly) multiple zeros on $\T$, and let $z=e^{i\theta}$ and $z^{+}=e^{i\theta^{+}}$ be a consecutive \emph{distinct} pair of zeros, so $\theta<\theta^{+}<\theta+2\pi$.
		Let $\zeta_1,\dots,\zeta_M$ be the distinct zeros of $\mathcal{L}$, and let $m_1,\dots,m_M$ be their multiplicities, so that $\sum_{r=1}^M m_r=N$.
		Write $z=\zeta_j$ and $z^{+}=\zeta_{j+1}$ (cyclic indexing), and set $\alpha:=\theta^{+}-\theta$.
		
		Group the weights by distinct zeros:
		\[
		\Lambda_r:=\sum_{\ell:\ z_\ell=\zeta_r}\lambda_\ell\qquad (1\le r\le M).
		\]
		Then $\Lambda_r>0$ for all $r$ and $\sum_{r=1}^M\Lambda_r=1$.
		Define
		\[
		\widetilde{\mathcal{L}}(u):=\prod_{r=1}^M (u-\zeta_r),
		\qquad
		Q(u):=\prod_{r=1}^M (u-\zeta_r)^{m_r-1},
		\]
		so that $\mathcal{L}(u)=Q(u)\,\widetilde{\mathcal{L}}(u)$.
		For each $r$, set $\widetilde{\mathcal{L}}_r(u):=\widetilde{\mathcal{L}}(u)/(u-\zeta_r)$.
		If $z_\ell=\zeta_r$, then
		\[
		\mathcal{L}_\ell(u)=\frac{\mathcal{L}(u)}{u-z_\ell}=\frac{\mathcal{L}(u)}{u-\zeta_r}=Q(u)\,\widetilde{\mathcal{L}}_r(u).
		\]
		Hence
		\[
		\mathcal{L}_\lambda(u)=\sum_{\ell=1}^N\lambda_\ell\,\mathcal{L}_\ell(u)
		=\sum_{r=1}^M \Lambda_r\, Q(u)\,\widetilde{\mathcal{L}}_r(u)
		=Q(u)\,\widetilde{\mathcal{L}}_\Lambda(u),
		\]
		where
		\[
		\widetilde{\mathcal{L}}_\Lambda(u):=\sum_{r=1}^M \Lambda_r\,\widetilde{\mathcal{L}}_r(u).
		\]
		Let $\widetilde{w}_1,\dots,\widetilde{w}_{M-1}$ be the zeros of $\widetilde{\mathcal{L}}_\Lambda$, counted with multiplicity.
		Then the zeros of $\mathcal{L}_\lambda$ consist of:
		(i) each $\zeta_r$ with multiplicity $m_r-1$ (coming from $Q$), and
		(ii) the zeros $\widetilde{w}_1,\dots,\widetilde{w}_{M-1}$.
		
		Therefore,
		\[
		\sum_{k=1}^{N-1}\Theta(w_k;z,z^{+})
		=
		\sum_{r=1}^M (m_r-1)\Theta(\zeta_r;z,z^{+})
		+\sum_{k=1}^{M-1}\Theta(\widetilde{w}_k;z,z^{+}).
		\]
		Since $z$ and $z^{+}$ are consecutive distinct zeros, every $\zeta_r$ lies outside the open arc from $z$ to $z^{+}$.
		Thus Lemma~\ref{lem:half-gap} gives $\Theta(\zeta_r;z,z^{+})=\alpha/2$ for $\zeta_r\notin\{z,z^{+}\}$, and Definition~\ref{def:endpoint} gives the same value when $\zeta_r\in\{z,z^{+}\}$.
		Consequently,
		\[
		\sum_{r=1}^M (m_r-1)\Theta(\zeta_r;z,z^{+})=(N-M)\frac{\alpha}{2}.
		\]
		
		Finally, $\widetilde{\mathcal{L}}$ has pairwise distinct zeros on $\T$ and the weights $\Lambda_r$ are strictly positive, so Step~1 applied to $\widetilde{\mathcal{L}}$ and $\widetilde{\mathcal{L}}_\Lambda$ yields
		\[
		\sum_{k=1}^{M-1}\Theta(\widetilde{w}_k;z,z^{+})
		=\pi+(M-2)\frac{\alpha}{2}.
		\]
		Adding the contributions gives
		\[
		\sum_{k=1}^{N-1}\Theta(w_k;z,z^{+})
		=(N-M)\frac{\alpha}{2}+\pi+(M-2)\frac{\alpha}{2}
		=\pi+(N-2)\frac{\alpha}{2},
		\]
		which is exactly \eqref{eq:main-angle}.
	\end{proof}

	\section{Two geometric lemmas for the gap principle} \label{sec:Two geometric lemmas for the gap principle}
	
	\begin{lem}[A diameter bound in the $\Dshape$-region]\label{lem:D-diameter}
		Let $z=e^{i\theta}$ and $z^{+}=e^{i\theta^{+}}$ with $\alpha:=\theta^{+}-\theta\in(\pi,2\pi)$.
		Then for every $u\in \Dshape(z,z^{+})$,
		\[
		\abs{u-z}\le \abs{z^{+}-z}=2\sin\!\Big(\frac{\alpha}{2}\Big).
		\]
	\end{lem}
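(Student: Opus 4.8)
The plan is to show that the whole region $\Dshape(z,z^{+})$ lies in the closed disc $B:=\{w\in\C:\abs{w-z}\le r\}$, where $r:=\abs{z^{+}-z}$; since $z\in\Dshape(z,z^{+})$, this immediately yields $\abs{u-z}\le r$ for every $u\in\Dshape(z,z^{+})$, and the identity $r=2\sin(\alpha/2)$ is just $\abs{e^{i\theta^{+}}-e^{i\theta}}=2\abs{\sin\!\big((\theta^{+}-\theta)/2\big)}$, the absolute value being superfluous because $\alpha/2\in(\pi/2,\pi)$ forces $\sin(\alpha/2)>0$. To prove $\Dshape(z,z^{+})\subseteq B$ I would exploit that $\Dshape(z,z^{+})=\udisc\cap H(z,z^{+})$ and $B$ are both compact and convex: a compact convex planar set is the convex hull of its boundary (if $x$ lies in the interior, take any line through $x$; it meets the boundary on both sides of $x$, so $x$ is a convex combination of two boundary points), hence if $\partial\Dshape(z,z^{+})\subseteq B$ then, by convexity of $B$, all of $\Dshape(z,z^{+})$ lies in $B$. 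Thus the problem reduces to a boundary estimate.

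For the boundary estimate, I would first observe that the line $L$ through $z$ and $z^{+}$ meets $\T$ only at $z$ and $z^{+}$, so $\partial\Dshape(z,z^{+})$ is contained in the union of the chord $[z,z^{+}]$ and the closed counterclockwise arc $A=\{e^{it}:\theta^{+}\le t\le\theta+2\pi\}$ on $\T$ from $z^{+}$ to $z$ (this arc has angular length $2\pi-\alpha$, which lies in $(0,\pi)$ precisely because $\alpha\in(\pi,2\pi)$). Every point of $[z,z^{+}]$ is trivially within distance $r$ of $z$. For $p=e^{it}\in A$ we have $\abs{p-z}=2\abs{\sin\!\big((t-\theta)/2\big)}$, and here the hypothesis $\alpha>\pi$ enters: as $t$ runs over $[\theta^{+},\theta+2\pi]$, the half-angle $(t-\theta)/2$ runs over $[\alpha/2,\pi]\subseteq[\pi/2,\pi]$, an interval on which $\sin$ is nonnegative and nonincreasing, so $\abs{p-z}=2\sin\!\big((t-\theta)/2\big)\le 2\sin(\alpha/2)=r$. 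Hence $\partial\Dshape(z,z^{+})\subseteq B$, and the lemma follows. Geometrically this is nothing but the statement that $\Dshape(z,z^{+})$ is the \emph{minor} circular segment cut off by the chord $[z,z^{+}]$, and the diameter of a minor segment is realised by its chord.

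The computations are routine; the only points that need care are the bookkeeping of which arc bounds $\Dshape(z,z^{+})$ and the verification that the relevant range of the half-angle is exactly $[\alpha/2,\pi]$, so that monotonicity of $\sin$ on $[\pi/2,\pi]$ applies — this is exactly the place where the assumption $\alpha\in(\pi,2\pi)$ is used, and the conclusion would genuinely fail for $\alpha\le\pi$. (One could alternatively argue via the law of sines in the triangle $z\,u\,z^{+}$, using the inscribed-angle bound $\Theta(u;z,z^{+})\ge\alpha/2$ for $u\in\Dshape(z,z^{+})$ coming from Lemma~\ref{lem:half-gap} and monotonicity of the subtended angle, but that route needs a separate treatment of the degenerate cases $u\in[z,z^{+}]$, whereas the convexity argument above disposes of all $u$ uniformly.)
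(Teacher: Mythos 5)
Your proof is correct and follows essentially the same route as the paper's: reduce to boundary points of the convex compact set $\Dshape(z,z^{+})$, handle the chord $[z,z^{+}]$ trivially, and bound the arc points using the chord-length formula together with the fact that the remaining arc has angular length $2\pi-\alpha<\pi$. The only (cosmetic) difference is the reduction step — you invoke ``a compact convex set is the convex hull of its boundary'' plus convexity of the disc $B$, whereas the paper scales along the ray from $z$ through $u$ to a boundary point; both are valid.
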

	
	\begin{proof}
		Since $\Dshape(z,z^{+})=\udisc\cap H(z,z^{+})$ is an intersection of convex sets, it is convex and compact.
		
		Fix $u\in\Dshape(z,z^{+})$, $u\neq z$.
		Consider the ray $\{z+t(u-z):t\ge 0\}$.
		By compactness there exists $t_*\ge 1$ such that $u_*:=z+t_*(u-z)\in\partial\Dshape(z,z^{+})$ and $[z,u_*]\subset\Dshape(z,z^{+})$.
		Then $u=z+s(u_*-z)$ with $s=1/t_*\in(0,1]$, hence $\abs{u-z}=s\,\abs{u_*-z}\le \abs{u_*-z}$.
		So it suffices to prove the bound for $u_*\in\partial\Dshape(z,z^{+})$.
		
		If $u_*$ lies on the chord $[z,z^{+}]$, then $\abs{u_*-z}\le \abs{z^{+}-z}$ is immediate.
		If instead $u_*=e^{it}$ lies on the boundary arc from $z^{+}$ to $z$, then the shorter central angle between $u_*$ and $z$ is at most $2\pi-\alpha\in(0,\pi)$, hence
		\[
		\abs{u_*-z}=\abs{e^{it}-e^{i\theta}}
		=2\sin\!\Big(\frac{\delta}{2}\Big)
		\le 2\sin\!\Big(\frac{2\pi-\alpha}{2}\Big)
		=2\sin\!\Big(\frac{\alpha}{2}\Big)
		=\abs{z^{+}-z},
		\]
		where $\delta\in[0,2\pi-\alpha]$ is that shorter central angle.
		(Note that $\sin(\alpha/2)=\sin((2\pi-\alpha)/2)$, so this equals the usual chord length.)
	\end{proof}
	
\begin{lem}[Angle gain away from the unit circle]\label{lem:angle-gain}

		Let $z=e^{i\theta}$ and $z^{+}=e^{i\theta^{+}}$ be distinct points on $\T$
		with $\theta<\theta^{+}<\theta+2\pi$, and set $\alpha:=\theta^{+}-\theta\in(0,2\pi)$.
		Fix $\varepsilon\in(0,1)$.
		Then for any $u\in \Dshape(z,z^{+})$ with $\abs{u}<1-\varepsilon$,
	\[
	\Theta(u;z,z^{+})-\frac{\alpha}{2}\ \ge\
	\begin{cases}
		\displaystyle \frac{\varepsilon}{2}\,\sin\!\Big(\frac{\alpha}{2}\Big),
		& \text{if }\alpha\le \pi,\\[10pt]
		\displaystyle \frac{\varepsilon}{2},
		& \text{if }\alpha>\pi.
	\end{cases}
	\]
\end{lem}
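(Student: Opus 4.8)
The plan is to rotate to a convenient position, read off a closed-form expression for the angle defect $\Theta(u;z,z^{+})-\tfrac{\alpha}{2}$, and then estimate the two factors occurring in it; the only delicate point is getting the constant right when $\alpha>\pi$. Rotation about the origin (the center of $\T$) leaves $\abs{u}$, the distances $\abs{z-u},\abs{z^{+}-u}$, and every angle $\Theta(\cdot\,;\cdot,\cdot)$ unchanged, so I may assume $z=-s-ic$ and $z^{+}=s-ic$, where $s:=\sin\tfrac{\alpha}{2}>0$ and $c:=\cos\tfrac{\alpha}{2}$. Then $[z,z^{+}]$ is the horizontal segment at height $-c$, its counterclockwise arc from $z^{+}$ to $z$ passes through $i\in\T$, and by Definition~\ref{def:D-region}, $\Dshape(z,z^{+})=\udisc\cap\{\,u:\operatorname{Im}u\ge -c\,\}$. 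For $u=x+iy\in\Dshape(z,z^{+})$ set $h:=y+c\ge 0$, the distance from $u$ to the line through $z$ and $z^{+}$. If $u\in\{z,z^{+}\}$ then $\abs{u}=1$, contrary to $\abs{u}<1-\varepsilon$; so assume $u\neq z,z^{+}$.

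Expanding $\overline{(z-u)}\,(z^{+}-u)$ gives real part $x^{2}+h^{2}-s^{2}$ and imaginary part $2sh$; using $y=h-c$ and $c^{2}+s^{2}=1$, the real part equals $\abs{u}^{2}+2ch-1$. As $\Theta:=\Theta(u;z,z^{+})\in[0,\pi]$ is the unoriented angle between $z-u$ and $z^{+}-u$, and $h\ge 0$,
\[
\cos\Theta=\frac{\abs{u}^{2}+2ch-1}{\abs{z-u}\,\abs{z^{+}-u}},\qquad \sin\Theta=\frac{2sh}{\abs{z-u}\,\abs{z^{+}-u}}.
\]
Expanding $\sin(\Theta-\tfrac{\alpha}{2})=\sin\Theta\cos\tfrac{\alpha}{2}-\cos\Theta\sin\tfrac{\alpha}{2}$ then yields the identity
\[
\sin\!\Big(\Theta-\tfrac{\alpha}{2}\Big)=\frac{s\,(1-\abs{u}^{2})}{\abs{z-u}\,\abs{z^{+}-u}},
\]
which I denote $(\star)$. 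Its right side is $\ge 0$ since $\abs{u}\le 1$ on $\Dshape$, so $\Theta-\tfrac{\alpha}{2}\in[0,\pi-\tfrac{\alpha}{2}]\subseteq[0,\pi)$; in particular $\Theta\ge\alpha/2$, consistent with the angle duality.

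Since $\abs{z}=\abs{z^{+}}=1$, the triangle inequality always gives $\abs{z-u},\abs{z^{+}-u}\le 1+\abs{u}<2-\varepsilon$, hence $\abs{z-u}\,\abs{z^{+}-u}<(2-\varepsilon)^{2}$. When $\alpha>\pi$ I sharpen this using the chord midpoint $m:=\tfrac{z+z^{+}}{2}=-ic$: then $\abs{m-u}^{2}=x^{2}+h^{2}=\abs{u}^{2}+2ch-c^{2}\le 1+2ch-c^{2}=s^{2}+2ch\le s^{2}$ (because $c<0$ and $h\ge 0$), so by the median identity $\abs{z-u}^{2}+\abs{z^{+}-u}^{2}=2\abs{m-u}^{2}+\tfrac12\abs{z-z^{+}}^{2}=2\abs{m-u}^{2}+2s^{2}\le 4s^{2}$ and the inequality $2ab\le a^{2}+b^{2}$, we get $\abs{z-u}\,\abs{z^{+}-u}\le 2s^{2}$. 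Since $\abs{u}<1-\varepsilon$ forces $1-\abs{u}^{2}>\varepsilon(2-\varepsilon)$, substituting into $(\star)$ gives
\[
\sin\!\Big(\Theta-\tfrac{\alpha}{2}\Big)>
\begin{cases}
\dfrac{s\,\varepsilon(2-\varepsilon)}{(2-\varepsilon)^{2}}=\dfrac{\varepsilon s}{2-\varepsilon}>\dfrac{\varepsilon}{2}\sin\tfrac{\alpha}{2}, & \alpha\le\pi,\\[2mm]
\dfrac{s\,\varepsilon(2-\varepsilon)}{2s^{2}}=\dfrac{\varepsilon(2-\varepsilon)}{2s}\ge\dfrac{\varepsilon}{2}, & \alpha>\pi,
\end{cases}
\]
using $2-\varepsilon<2$ in the first line and $\sin\tfrac{\alpha}{2}\le 1$, $2-\varepsilon\ge 1$ in the second. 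Put $\delta:=\Theta-\tfrac{\alpha}{2}\in[0,\pi)$ and let $k\in(0,1)$ be the final lower bound above ($\tfrac{\varepsilon}{2}\sin\tfrac{\alpha}{2}$ if $\alpha\le\pi$, $\tfrac{\varepsilon}{2}$ if $\alpha>\pi$). Then $\sin\delta>k$ forces $\delta>k$: either $\delta\ge\tfrac{\pi}{2}>k$, or $\delta<\tfrac{\pi}{2}$ and then $\delta=\arcsin(\sin\delta)>\arcsin k\ge k$. This is the asserted inequality.

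I expect the only real difficulty to be the constant in the case $\alpha>\pi$. A naive diameter bound (in the spirit of Lemma~\ref{lem:D-diameter}) only gives $\abs{z-u},\abs{z^{+}-u}\le 2\sin(\alpha/2)$, hence $\abs{z-u}\,\abs{z^{+}-u}\le 4\sin^{2}(\alpha/2)$: larger by a factor $2$ than what is needed, and through $(\star)$ it would yield only $\delta\gtrsim\tfrac{\varepsilon(2-\varepsilon)}{4}$, which is insufficient when $\varepsilon$ is close to $1$. Recovering the lost factor requires the sharper fact that $\Dshape(z,z^{+})$ lies in the closed disk of radius $\sin(\alpha/2)$ centered at the \emph{midpoint} $m$ of the chord (not merely at $z$), after which the median identity together with $2ab\le a^{2}+b^{2}$ closes the gap. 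Verifying that containment is exactly the place where the hypothesis $\alpha>\pi$, i.e.\ $c=\cos(\alpha/2)<0$, is genuinely used.
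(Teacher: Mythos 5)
Your proof is correct, and it takes a genuinely different route from the paper's. The paper argues synthetically: it draws the second intersection $A$ of the line through $u$ and $z^{+}$ with $\T$, identifies the gain $\Theta(u;z,z^{+})-\alpha/2$ as the angle $\phi_2$ of the triangle $\triangle Auz$ at $z$ via the inscribed angle theorem and the angle sum, and then bounds $\phi_2$ from below by the law of sines together with $\abs{Au}\ge\varepsilon$ and a bound on the \emph{single} side $\abs{uz}$ (trivially $\le 2$ when $\alpha\le\pi$, and $\le 2\sin(\alpha/2)$ via Lemma~\ref{lem:D-diameter} when $\alpha>\pi$); the collinear case is treated separately. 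You instead derive the exact identity $\sin\bigl(\Theta-\tfrac{\alpha}{2}\bigr)=\sin(\tfrac{\alpha}{2})\,(1-\abs{u}^2)/(\abs{z-u}\,\abs{z^{+}-u})$, which I have checked by direct computation; it handles collinear $u$ uniformly, quantifies the gain directly in terms of $1-\abs{u}^2$, and incidentally reproves Corollary~\ref{cor:angle-lowerbound}. The price is that your denominator is the \emph{product} of the two distances, so for $\alpha>\pi$ the diameter bound of Lemma~\ref{lem:D-diameter} loses a factor of $2$, as you correctly note, and you need the sharper containment of $\Dshape(z,z^{+})$ in the disk of radius $\sin(\alpha/2)$ about the chord midpoint $m$, combined with the median identity and $2ab\le a^2+b^2$; your computation $\abs{m-u}^2=\abs{u}^2+2ch-1+s^2\le s^2$ (using $c<0$, $h\ge 0$) is right and is exactly where $\alpha>\pi$ enters. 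One small simplification: the closing step could read $\delta\ge\sin\delta>k$ directly, rather than the case split on $\delta\gtrless\pi/2$ with $\arcsin$, though what you wrote is also valid since $k<\tfrac12<\tfrac{\pi}{2}$.
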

\begin{figure}[t]
	\centering
	\begin{tikzpicture}[scale=3.1, line cap=round, line join=round, >=Stealth]
		
		\def\alphaDeg{60}   
		\def\rinner{0.75}   
		\def\ut{0.45}       
		
		\coordinate (O)  at (0,0);
		\coordinate (z)  at (0:1);
		\coordinate (zp) at (\alphaDeg:1);
		
		\coordinate (A)  at (210:1);
		
		\coordinate (u) at ($(A)!\ut!(zp)$);
		
		\tikzset{
			main/.style={thick},
			thinmain/.style={semithick},
			inner/.style={dotted, line width=0.35pt},
			help/.style={densely dashed},
			lab/.style={font=\small},
			ang/.style={draw, line width=0.6pt},
		}
		
		\draw[thinmain] (O) circle (1);
		\draw[inner]    (O) circle (\rinner);
		\node[lab] at (0,\rinner+0.06) {$|u|=1-\varepsilon$};
		
		\draw[main] (z) -- (zp);
		
		\draw[help,->] (z) arc[start angle=0, end angle=\alphaDeg, radius=1];
		\node[lab] at ({\alphaDeg/2}:1.14) {$\alpha$};
		
		\draw[main] (A) -- (zp);
		
		\draw[main] (A) -- (z);
		\draw[main] (u) -- (z);
		
		\draw[main] (A) -- (u)
		node[pos=0.55, sloped, above, fill=white, inner sep=1.2pt, lab]
		{$|Au|\ge \varepsilon$};
		
		\fill (z)  circle (0.015);
		\fill (zp) circle (0.015);
		\fill (A)  circle (0.015);
		\fill (u)  circle (0.015);
		
		\node[lab, anchor=west]  at ($(z)+(0.03,-0.02)$) {$z$};
		\node[lab, anchor=south] at ($(zp)+(0,0.03)$) {$z^{+}$};
		\node[lab, anchor=north east] at ($(A)+(-0.02,-0.02)$) {$A$};
		\node[lab, anchor=west]  at ($(u)+(0.02,0)$) {$u$};
		
		\pic[ang, angle radius=0.22, angle eccentricity=1.55,
		pic text={$\alpha/2$},
		pic text options={fill=white, inner sep=1pt, lab}
		] {angle = z--A--zp};
		
		\pic[ang, angle radius=0.18, angle eccentricity=1.85,
		pic text={$\phi_2$},
		pic text options={fill=white, inner sep=1pt, lab}
		] {angle = A--z--u};
		
	\end{tikzpicture}
	\caption{Auxiliary configuration in proof of Lemma~\ref{lem:angle-gain}.}
	\label{fig:angle-gain-geom}
\end{figure}
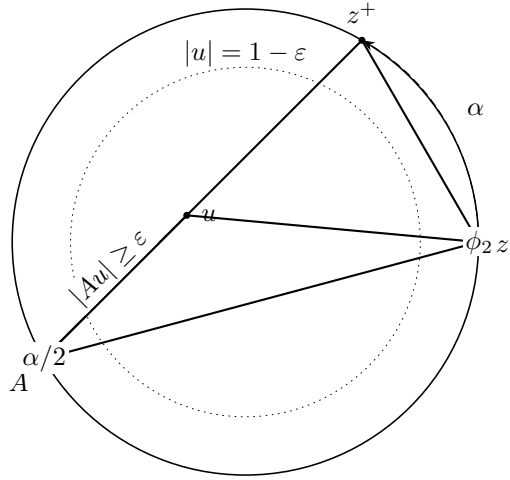

\begin{proof}
	If $u,z,z^{+}$ are collinear, then $u$ lies on the chord $[z,z^{+}]$ (since $u\in\udisc$ and the line through $z$ and $z^{+}$ meets $\T$ only at $z,z^{+}$), and hence $\Theta(u;z,z^{+})=\pi$.
	If $\alpha\le \pi$, then the claimed inequality is immediate since the right-hand side is at most $\varepsilon/2<\pi/2\le \pi-\alpha/2$.
	If $\alpha>\pi$, set $\beta:=2\pi-\alpha\in(0,\pi)$ (the shorter central angle between $z$ and $z^{+}$).
	After a rotation we may assume $z=e^{-i\beta/2}$ and $z^{+}=e^{i\beta/2}$, so the chord $[z,z^{+}]$ is the vertical segment with real part $\cos(\beta/2)$; in particular, every point on the chord satisfies $\abs{u}\ge \cos(\beta/2)$.
	Since $\abs{u}<1-\varepsilon$, we have $\cos(\beta/2)<1-\varepsilon$, so $1-\cos(\beta/2)>\varepsilon$.
	Using $1-\cos x\le x^2/2$ for all real $x$, we obtain $(\beta/2)^2/2>\varepsilon$, hence $\beta/2>\sqrt{2\varepsilon}\ge \varepsilon/2$ (as $\varepsilon\in(0,1)$).
	Therefore
	\[
	\Theta(u;z,z^{+})-\frac{\alpha}{2}
	=\pi-\frac{\alpha}{2}
	=\frac{\beta}{2}
	\ge \frac{\varepsilon}{2},
	\]
	which matches the $\alpha>\pi$ case.
	
	So assume $u,z,z^{+}$ are not collinear.
	
	Let the line through $u$ and $z^{+}$ meet the unit circle again at $A\neq z^{+}$.
	Since $H(z,z^{+})$ is a (closed) half-plane and $z^{+}\in\partial H(z,z^{+})$, the entire ray $\{\,z^{+}+t(u-z^{+}): t\ge 0\,\}$ is contained in $H(z,z^{+})$ whenever $u\in H(z,z^{+})$.
	Because $u\in \Dshape(z,z^{+})=\udisc\cap H(z,z^{+})$, this implies that $A$ lies on the boundary arc of $\Dshape(z,z^{+})$ from $z^{+}$ to $z$.
	
	Consider the triangle $\triangle Auz$ and set
	\[
	\phi_1:=\Theta(A;u,z),\qquad \phi_2:=\Theta(z;A,u).
	\]
	Since $A,z,z^{+}\in\T$ and $A,u,z^{+}$ are collinear, the inscribed angle theorem gives
	\[
	\phi_1=\Theta(A;z,z^{+})=\frac{\alpha}{2}.
	\]
	Moreover, since $u$ lies between $A$ and $z^{+}$ on their common line, the rays from $u$ to $A$ and to $z^{+}$ are opposite, so
	\[
	\Theta(u;z,z^{+})=\pi-\Theta(u;A,z).
	\]
	Using that the angles in $\triangle Auz$ sum to $\pi$, we obtain
	\[
	\phi_2=\pi-\phi_1-\Theta(u;A,z)=\Theta(u;z,z^{+})-\frac{\alpha}{2}.
	\]
	Thus it suffices to bound $\phi_2$ from below.
	
	By the law of sines in $\triangle Auz$,
	\[
	\frac{\sin\phi_2}{\abs{Au}}=\frac{\sin\phi_1}{\abs{uz}}.
	\]
	Since $\phi_2\in(0,\pi)$ we have $\phi_2\ge \sin\phi_2$.
	Also, $\abs{A}=1$ and $\abs{u}\le 1-\varepsilon$ imply by the reverse triangle inequality that
	\[
	\abs{Au}\ge \bigl|\abs{A}-\abs{u}\bigr|\ge \varepsilon.
	\]
	Therefore,
	\begin{equation}\label{eq:phi2-basic}
		\phi_2 \ge \sin\phi_2 = \abs{Au}\frac{\sin\phi_1}{\abs{uz}}
		\ge \varepsilon\,\frac{\sin(\alpha/2)}{\abs{uz}}.
	\end{equation}
	
	If $\alpha\le\pi$, then trivially $\abs{uz}\le 2$, so \eqref{eq:phi2-basic} yields
	\[
	\phi_2\ge \frac{\varepsilon}{2}\sin\!\Big(\frac{\alpha}{2}\Big).
	\]
	If $\alpha>\pi$, then Lemma~\ref{lem:D-diameter} gives $\abs{uz}\le \abs{z^{+}-z}=2\sin(\alpha/2)$, hence
	\[
	\phi_2\ge \varepsilon\,\frac{\sin(\alpha/2)}{2\sin(\alpha/2)}=\frac{\varepsilon}{2}.
	\]
	This proves both cases.
\end{proof}

\begin{cor}[A uniform lower bound for subtended angles]\label{cor:angle-lowerbound}
	Let $z=e^{i\theta}$ and $z^{+}=e^{i\theta^{+}}$ be distinct points on $\T$ with $\alpha:=\theta^{+}-\theta\in(0,2\pi)$.
	Then for every $u\in \Dshape(z,z^{+})$,
	\[
	\Theta(u;z,z^{+})\ge \frac{\alpha}{2},
	\]
	with equality (for $u\neq z,z^{+}$) precisely when $u$ lies on the boundary arc of $\Dshape(z,z^{+})$ from $z^{+}$ to $z$.
	At the endpoints $u\in\{z,z^{+}\}$, equality holds by Definition~\ref{def:endpoint}.
\end{cor}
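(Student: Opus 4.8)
The plan is to read this off directly from Lemma~\ref{lem:angle-gain} together with the inscribed-angle facts already recorded in Section~\ref{sec:Preliminaries}, by partitioning $\Dshape(z,z^{+})$ into three exhaustive pieces according to the position of $u$: the two endpoints $\{z,z^{+}\}$; the remaining points of $\T\cap H(z,z^{+})$; and the points with $\abs{u}<1$. Since $\Dshape(z,z^{+})=\udisc\cap H(z,z^{+})$, every $u\in\Dshape(z,z^{+})$ falls into exactly one of these, so it suffices to treat them one at a time.

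For $u\in\{z,z^{+}\}$, Definition~\ref{def:endpoint} gives $\Theta(u;z,z^{+})=\alpha/2$ outright, so equality holds. For $u\in\T\cap H(z,z^{+})$ with $u\neq z,z^{+}$, I would note that $\T\cap H(z,z^{+})$ is the closed counterclockwise arc from $z^{+}$ to $z$, so such a $u$ lies on the open counterclockwise arc from $z^{+}$ to $z$ and hence is \emph{not} on the open counterclockwise arc from $z$ to $z^{+}$; Lemma~\ref{lem:half-gap} then yields $\Theta(u;z,z^{+})=(\theta^{+}-\theta)/2=\alpha/2$, i.e.\ equality. This open arc is precisely the boundary arc of $\Dshape(z,z^{+})$ from $z^{+}$ to $z$.

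For $u\in\Dshape(z,z^{+})$ with $\abs{u}<1$ (necessarily $u\neq z,z^{+}$, as those lie on $\T$), I would pick any $\varepsilon\in(0,1)$ with $\abs{u}<1-\varepsilon$ and invoke Lemma~\ref{lem:angle-gain}: this gives $\Theta(u;z,z^{+})-\alpha/2\ge \tfrac{\varepsilon}{2}\sin(\alpha/2)>0$ when $\alpha\le\pi$ and $\Theta(u;z,z^{+})-\alpha/2\ge \tfrac{\varepsilon}{2}>0$ when $\alpha>\pi$, so the inequality is strict. Combining the three cases proves $\Theta(u;z,z^{+})\ge\alpha/2$ on all of $\Dshape(z,z^{+})$, and shows that for $u\neq z,z^{+}$ equality forces $\abs{u}=1$, hence $u$ on the stated boundary arc; conversely that arc does give equality by the second case.

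I do not expect a genuine obstacle here, since the substance is entirely contained in Lemmas~\ref{lem:angle-gain} and~\ref{lem:half-gap}. The only point requiring a little care is the bookkeeping for $\partial\Dshape(z,z^{+})$: that $\T\cap H(z,z^{+})$ is exactly the closed counterclockwise arc from $z^{+}$ to $z$ (so Lemma~\ref{lem:half-gap} applies with the correct opposite arc), and that the chord $[z,z^{+}]$ meets $\T$ only at the endpoints (so any non-endpoint chord point automatically has $\abs{u}<1$ and is swept up by the third case, where in fact $\Theta(u;z,z^{+})=\pi>\alpha/2$ since $\alpha\in(0,2\pi)$).
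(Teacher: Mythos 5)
Your proposal is correct and follows essentially the same route as the paper: endpoints via Definition~\ref{def:endpoint}, the boundary arc via Lemma~\ref{lem:half-gap}, and strictness off the unit circle via the angle-gain machinery. The only (harmless) difference is that for points with $\abs{u}<1$ you quote Lemma~\ref{lem:angle-gain} as a black box with a suitably chosen $\varepsilon$, whereas the paper treats chord points separately (where $\Theta=\pi$) and, for interior points of $\Dshape(z,z^{+})$, re-runs the auxiliary triangle construction to conclude $\phi_2>0$ qualitatively without needing the quantitative bound.
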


\begin{proof}
	If $u\in\{z,z^{+}\}$, the claim follows from Definition~\ref{def:endpoint}.
	
	Assume $u\notin\{z,z^{+}\}$.
	If $u$ lies on the boundary arc of $\Dshape(z,z^{+})$ from $z^{+}$ to $z$, then $u\in\T$ and $u$ is not on the open arc from $z$ to $z^{+}$ (counterclockwise), so Lemma~\ref{lem:half-gap} gives $\Theta(u;z,z^{+})=\alpha/2$.
	
	If $u$ lies on the chord $[z,z^{+}]$, then $u,z,z^{+}$ are collinear and $\Theta(u;z,z^{+})=\pi\ge \alpha/2$.
	
	Finally, if $u$ lies in the interior of $\Dshape(z,z^{+})$, repeat the construction in the proof of Lemma~\ref{lem:angle-gain}:
	let the line through $u$ and $z^{+}$ meet $\T$ again at $A\neq z^{+}$.
	Then, with $\phi_1,\phi_2$ as there, we have $\phi_1=\alpha/2$ and
	\[
	\phi_2=\Theta(u;z,z^{+})-\frac{\alpha}{2},
	\]
	where $\phi_2$ is an interior angle of the (nondegenerate) triangle $\triangle Auz$, hence $\phi_2>0$.
	Therefore $\Theta(u;z,z^{+})>\alpha/2$ for interior points.
\end{proof}

\section{Proof of Theorem~\ref{thm:incomplete_gap}}\label{sec:Proof of Theorem incomplete_gap}

\begin{proof}[Proof of Theorem~\ref{thm:incomplete_gap}]
	If $M=1$, then $\mathcal{L}(u)=(u-\zeta_1)^N$ and hence, for every $1\le j\le N$,
	\[
	\mathcal{L}_j(u)=\frac{\mathcal{L}(u)}{u-z_j}=(u-\zeta_1)^{N-1}.
	\]
	Therefore $\mathcal{L}_\lambda(u)=(u-\zeta_1)^{N-1}$, so all zeros of $\mathcal{L}_\lambda$ lie on $\T$ and $\mathcal{N}_\varepsilon=0$ for every $\varepsilon\in(0,1)$.
	The desired bound holds trivially. Hence we may assume $M\ge 2$.
	
	Let $z=e^{i\theta}$ and $z^{+}=e^{i\theta^{+}}$ be a consecutive \emph{distinct} pair of zeros of $\mathcal{L}$ realizing the maximal gap
	\[
	\alpha:=\theta^{+}-\theta = G.
	\]
	Let $w_1,\dots,w_{N-1}$ be the zeros of $\mathcal{L}_\lambda(u)$, counted with multiplicity.
	
	By Corollary~\ref{cor:q-in-D}, every zero $w_k$ of $\mathcal{L}_\lambda$ lies in $\Dshape(z,z^{+})$.
	In particular, by Corollary~\ref{cor:angle-lowerbound}, for each such $w_k$,
	\begin{equation}\label{eq:angle-lower}
		\Theta(w_k;z,z^{+})\ge \frac{\alpha}{2},
	\end{equation}
	with equality permitted at the endpoints via Definition~\ref{def:endpoint}.
	
	By Theorem~\ref{thm:incomplete_dual},
	\begin{equation}\label{eq:sum-angle}
		\sum_{k=1}^{N-1}\Theta(w_k;z,z^{+})=\pi+(N-2)\frac{\alpha}{2}.
	\end{equation}
	
	Let $\mathcal{N}_\varepsilon$ be as in \eqref{eq:def-Neps}.
	Split the sum \eqref{eq:sum-angle} into the $\mathcal{N}_\varepsilon$ terms with $\abs{w_k}<1-\varepsilon$ and the remaining terms.
	Using \eqref{eq:angle-lower} for the remaining $N-1-\mathcal{N}_\varepsilon$ terms and Lemma~\ref{lem:angle-gain} for the $\mathcal{N}_\varepsilon$ interior terms yields
	\[
	\sum_{k=1}^{N-1}\Theta(w_k;z,z^{+})
	\ge
	\mathcal{N}_\varepsilon\Big(\frac{\alpha}{2}+\delta\Big) + (N-1-\mathcal{N}_\varepsilon)\frac{\alpha}{2},
	\]
	where
	\[
	\delta=
	\begin{cases}
		\displaystyle \frac{\varepsilon}{2}\,\sin\!\Big(\frac{\alpha}{2}\Big), & \alpha\le \pi,\\[8pt]
		\displaystyle \frac{\varepsilon}{2}, & \alpha>\pi.
	\end{cases}
	\]
	Comparing with \eqref{eq:sum-angle} gives
	\[
	\mathcal{N}_\varepsilon\,\delta \le \pi-\frac{\alpha}{2}.
	\]
	
	\smallskip\noindent
	\textbf{Case 1: $\alpha\le \pi$.}
	Let $x:=\alpha/2\in(0,\pi/2]$. Then
	\[
	\mathcal{N}_\varepsilon \le \frac{\pi-x}{(\varepsilon/2)\sin x}
	=\frac{2}{\varepsilon}\cdot \frac{\pi-x}{\sin x}.
	\]
	Using the elementary bound $\sin x \ge x-\frac{x^3}{6}$ for $x\ge 0$, and the fact $x\le \pi/2<6/\pi$, we obtain
	\[
	\pi\sin x \ge \pi x-\frac{\pi}{6}x^3 \ge \pi x-x^2=x(\pi-x),
	\]
	so $\sin x \ge x(\pi-x)/\pi$, hence $(\pi-x)/\sin x \le \pi/x$. Consequently,
	\[
	\mathcal{N}_\varepsilon \le \frac{2}{\varepsilon}\cdot \frac{\pi}{x}
	=\frac{4\pi}{\varepsilon\,\alpha}.
	\]
	
	\smallskip\noindent
	\textbf{Case 2: $\alpha>\pi$.}
	Then $\delta=\varepsilon/2$ and
	\[
	\mathcal{N}_\varepsilon \le \frac{\pi-\alpha/2}{\varepsilon/2}=\frac{2\pi-\alpha}{\varepsilon}.
	\]
	For $\alpha\in[\pi,2\pi]$ we have $\alpha(2\pi-\alpha)\le \pi^2<4\pi$, hence $2\pi-\alpha\le 4\pi/\alpha$, and therefore
	\[
	\mathcal{N}_\varepsilon \le \frac{4\pi}{\varepsilon\,\alpha}.
	\]
	
	In both cases, since $\alpha=G$, we obtain $\mathcal{N}_\varepsilon \le 4\pi/(\varepsilon G)$, completing the proof.
\end{proof}

	\section*{Declaration of competing interest}
	The author declares no competing interests.
	
	\section*{Data availability}
	No data was used for the research described in the article.
	
	\section*{Acknowledgments}
Teng Zhang is supported by the China Scholarship Council, the Young Elite Scientists Sponsorship Program for PhD Students (China Association for Science and Technology), and the Fundamental Research Funds for the Central Universities at Xi'an Jiaotong University (Grant No.~xzy022024045).

\end{document}